\documentclass[11pt]{article}

\usepackage[T1]{fontenc}
\usepackage[utf8]{inputenc}
\usepackage{lmodern}
\usepackage[margin=1in]{geometry}
\usepackage{amsmath,amssymb,amsthm,mathtools}
\usepackage{microtype}
\usepackage{xcolor}
\usepackage{enumitem}
\usepackage{booktabs}
\usepackage{array}
\usepackage{hyperref}

\hypersetup{
  colorlinks=true,
  linkcolor=blue!55!black,
  citecolor=blue!55!black,
  urlcolor=blue!55!black,
  pdftitle={Polynomial Bohnenblust--Hille Bounds on Finite Cyclic Products},
  pdfauthor={Research draft}
}

\allowdisplaybreaks
\setlist[itemize]{leftmargin=2em,itemsep=0.25em,topsep=0.35em}
\setlist[enumerate]{leftmargin=2em,itemsep=0.25em,topsep=0.35em}

\newtheorem{theorem}{Theorem}[section]
\newtheorem{proposition}[theorem]{Proposition}
\newtheorem{lemma}[theorem]{Lemma}
\newtheorem{corollary}[theorem]{Corollary}
\theoremstyle{definition}

\theoremstyle{remark}
\newtheorem{remark}[theorem]{Remark}

\newcommand{\E}{\mathbb{E}}
\newcommand{\C}{\mathbb{C}}
\newcommand{\N}{\mathbb{N}}

\newcommand{\Z}{\mathbb{Z}}
\newcommand{\wh}{\widehat}
\newcommand{\norm}[1]{\left\lVert #1\right\rVert}
\newcommand{\abs}[1]{\left\lvert #1\right\rvert}
\newcommand{\e}{\mathrm{e}}
\newcommand{\supp}{\operatorname{supp}}
\newcommand{\BH}{\mathrm{BH}}
\DeclareMathOperator{\Bin}{Bin}

\title{\textbf{Polynomial Bohnenblust--Hille Bounds\\ for product of cyclic groups}
\\[0.35em]
\large A Potts-hypercontractive weighted bootstrap}
\date{September 2, 2026}
\author{Joseph Slote, Alexander Volberg}
\date{September 3, 2026}

\begin{document}
\maketitle


\begin{abstract}
Fix an integer $K\ge2$, and let $C_K^n =\{(e^{\frac{2\pi ij}{K}})_{j=0}^{K-1}\}^n$ be the product of
cyclic groups of order $K$.  For a Fourier character $\chi_\alpha$, let
$s(\alpha)$ be the number of active coordinates.  We give a self-contained
proposed proof that the dimension-free Bohnenblust--Hille constants governed
by interaction order grow polynomially: if
$p_d=2d/(d+1)$ and
\[
 \gamma_2=\frac12,
 \qquad
 \gamma_K=\frac{K\log(K-1)}{4(K-2)}\quad(K\ge3),
\]
then the $\ell^{p_d}$ norm of Fourier coefficients $\{\hat f(\alpha)\}$ is bounded  by $L^\infty$ norm of $f$ multiplied by $C(K) d^{4\gamma_K+5}$.
The constant $C(K)$ is actually at most of the order $K^{5/2}$.
\end{abstract}

\tableofcontents

\section{Introduction}

We were made aware of the paper \cite{I} on September 7. It was published online on August 27. For Hamming cube case $K=2$ it has the same result with a slightly worse exponent.

\medskip

Bohnenblust--Hille inequality has an old and venerable history. It was first proved in \cite{BohnenblustHille} in 1931 and gave the 
$n$-free estimate of a certain coefficient norm of an analytic  polynomial  of variables $n$ via its maximum on multi-torus $\mathbb T^n$.
The constant was independent of $n$ but dependent in a superexponential fashion on the degree $m$ of the polynomial.
The result was needed to answer a question of Harold Bohr concerning a convergence property of Dirichlet series.

Much later this result was revised and the constant was improved in \cite{DefantFrerickOrtegaCerdaOunaiesSeip}, and the constant became exponential in degreee $m$.

A bit later \cite{BayartPellegrinoSeoane-Sep\'ulveda} proved a subexponential in $m$ estimate for multi-torus case.

Very soon \cite{DefantMastyloPerez} proved a similar subexponential estimate for polynomials on Hamming cube.

This latter result was used in a learning theory by Eskenazis--Ivanisvili \cite{EskenazisIvanisvili} 
to obtain an optimal number of queries to PAC (probably approximately correct) learning of function of low degree on Boolean cube.

In note \cite{SloteVolberg} we gave a polynomial in degree estimate for the constant of the Bohnenblast--Hille inequality on Boolean cube.

The argument in \cite{SloteVolberg} adapted the weighted graded square-function method introduced in
Pellegrino and Teixeira \cite{PellegrinoTeixeira}  Section 5. There it was introduced  for analytic polynomials.  
There are several new and exciting ideas in \cite{PellegrinoTeixeira}. One of them is a bicoloring of variables, another is a special square function
on coefficients of polynomials. 

On the Boolean cube,
Bonami--Beckner hypercontractivity supplies simultaneous row and column
control after a random bicoloring of the coordinates. Bicoloring idea comes from \cite{PellegrinoTeixeira}, but in the case of Hamming cube it can be realized in a simpler way.
Every monomial on Hamming cube is 
square-free, so the dominant-power compression regime from the analytic
setting disappears.  


The current note is devoted to Bohnenblust--Hille inequality on cyclic group. Now we have $K\ge 3$ and the group $C_K$ of $K$-th
roots of unity. 
Polynomial $F$ of $n$ variables on $C_K^n$ has total degree $m$ but the degree in each variable is at most $K-1$.

Such $n$-free Bohnenblust--Hille inequality was proved in \cite{BeckerKleinSloteVolbergZhang}, \cite{SloteVolbergZhang}, 
but the constant was exponential in the degree $m$ of the polynomial (and of course also depending on $K$).

Here the polynomial in $m$ estimate is obtained. The method repeats this of \cite{PellegrinoTeixeira} Section 5 by exploring the excited ideas introduced in this paper.

Also the proof follows closely the one in \cite{SloteVolberg}, but with some caveats. For example, 
in the cyclic group situation we know \cite{SloteVolbergZhang}, \cite{DefantEtAlHamming} that the spectral projection on the linear combination of monomials
$z^\alpha$ such that the cardinality  of $\supp\alpha$  has an estimate independent on dimension $n$. 

On Hamming cube ($K=2$) such spectral projection is the same as the projection on the fixed homogeneity part, but for $K\ge 3$
these are two different things.

\medskip

Bohnenblust--Hille inequality on cyclic groups was used to get a quantum Bohnenblust--Hille inequality \cite{SloteVolbergZhangNC}, \cite{SloteVolbergZhang},
Interestingly the constant is exponential in $m$ because of the reduction of non-commutative case to the commutative cyclic group case. 
But what is even more interesting is that this exponential nature of the non-commutative Bohnenblust--Hille inequality cannot be made sub-exponential, see \cite{Slote}.

\medskip

The architecture of the proof repeats this of \cite{PellegrinoTeixeira} Section 5 to a large extent. It is the weighted graded bootstrap of
Pellegrino--Teixeira: a weighted square function, simultaneous row and column
control, 
and a strict binary-entropy contraction for
balanced two-block degrees.  

Its implementation on support layers follows the
Boolean-cube  argument \cite{SloteVolberg}: whole-coordinate random bicoloring, exact
reconstruction of the parent Fourier index, binomial central capture, 
and a separate low-level estimate.  

The support mode splitting is much more suitable here than the homogeneous degree mode splitting. This phenomena 
was already noticed in \cite{SloteVolbergZhang} and in \cite{DefantEtAlHamming}, but  \cite{DefantEtAlHamming}, gives a much better estimate
on the spectral projection $L^\infty$ norm that the one in \cite{SloteVolbergZhang}. This better estimate is crucial, it is used in Lemma \ref{lem:projection} below.

\medskip

The only new
high-degree analytic input taken from the cyclic-group paper of
Pellegrino--Raposo Junior  \cite{PellegrinoRaposo} is its Potts hypercontractive radius
$\rho\le(p-1)^{\gamma_K}$.  Three fixed low interaction levels are closed
using the previously known support-sensitive cyclic Bohnenblust--Hille
inequality and an interior Bernstein estimate.  An explicit contraction
constant $0.820773$ important for bootstrapping argument that finishes the proof. It   is obtained uniformly in $K$.

\

\subsection{The problem and the proposed result}

Let
\[
 C_K=\{1,\omega,\ldots,\omega^{K-1}\},
 \qquad \omega=\e^{2\pi i/K},
\]
with normalized counting measure.  We identify $C_K$ with $\Z/K\Z$ and work
multiplicatively because this makes the Fourier characters monomials.
For $\alpha\in\{0,\ldots,K-1\}^N$, set
\[
 \chi_\alpha(x)=x^\alpha=\prod_{j=1}^N x_j^{\alpha_j},
 \qquad
 s(\alpha)=\#\{j:\alpha_j\ne0\}.
\]
The quantity $s(\alpha)$ is the \emph{interaction order}.  It records the
number of coordinates on which the character depends, independently of the
local nonzero degrees $\alpha_j\in\{1,\ldots,K-1\}$.

Pellegrino and Raposo Junior recently proved a subexponential bound for the
corresponding dimension-free constants and, in particular, for the larger
interaction-order class \cite{PellegrinoRaposo}.  The purpose of this note is
to show that the weighted entropy mechanism of Pellegrino and Teixeira
\cite{PellegrinoTeixeira}, when combined with Potts hypercontractivity, appears
to improve the degree dependence from subexponential to polynomial.

Our main statement is the following.

\begin{theorem}[Proposed polynomial cyclic Bohnenblust--Hille bound]
\label{thm:main-intro}
For every fixed integer $K\ge2$, there is a constant $C_K<\infty$ such that,
for every $d,N\ge1$ and every complex-valued function
$f:C_K^N\to\C$ whose Fourier transform is supported on
$s(\alpha)\le d$,
\begin{equation}
\label{eq:int-main-intro}
 \left(
  \sum_{s(\alpha)\le d}
  \abs{\wh f(\alpha)}^{p_d}
 \right)^{1/p_d}
 \le C_K d^{4\gamma_K+5}\norm f_\infty,
 \qquad
 p_d=\frac{2d}{d+1}.
\end{equation}
Here
\begin{equation}
\label{eq:gamma-intro}
 \gamma_2=\frac12,
 \qquad
 \gamma_K=\frac{K\log(K-1)}{4(K-2)}\quad(K\ge3).
\end{equation}
Consequently, for $K\ge3$ the displayed power is
\begin{equation}
\label{eq:power-explicit}
 4\gamma_K+5
 =5+\frac{K\log(K-1)}{K-2}.
\end{equation}
The same bound holds for functions of canonical total degree at most $d$.
\end{theorem}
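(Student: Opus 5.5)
We prove the bound one interaction level at a time and then sum over levels. For a level $s\le d$ write $f_s$ for the projection of $f$ onto characters with $s(\alpha)=s$. The first ingredient is the dimension-free bound on this support-layer projection from \cite{DefantEtAlHamming}, recorded later as Lemma~\ref{lem:projection}. It gives $\norm{f_s}_\infty\le C(K)\,d^{a}\norm f_\infty$ for an explicit small power $a$. Summing the per-level bounds over $s\le d$ costs at most one more factor of $d$. Moreover $p_d\le p_s$ fails in the wrong direction, but the standard comparison $\|\cdot\|_{\ell^{p_d}}\le |\supp|^{\cdots}$ is unavailable. So we shall instead prove the level-$s$ estimate directly with exponent $p_d$, using $p_s\le p_d$ together with the monotonicity of $\ell^p$ norms, $\|\cdot\|_{\ell^{p_d}}\le\|\cdot\|_{\ell^{p_s}}$. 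It therefore suffices to show
\[
 \Big(\sum_{s(\alpha)=s}\abs{\wh g(\alpha)}^{p_s}\Big)^{1/p_s}\le C_K\, s^{4\gamma_K+c}\norm g_\infty
\]
for every $g$ supported on the single layer $s(\alpha)=s$. The statement about canonical total degree $\le d$ then follows immediately, since every such character has $s(\alpha)\le d$.

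For the single-layer estimate we run the Pellegrino--Teixeira weighted graded bootstrap. We define $B(s)$ to be the best constant in the single-layer inequality and aim for a recursion of the form $B(s)\le \theta\,B(\lceil s/2\rceil)\cdot s^{\beta}$ with a contraction $\theta<1$. We split each support $S=\supp\alpha$ by a uniformly random bicoloring of the coordinates into $S=S_1\sqcup S_2$. Restricting to balanced splits with $|S_1|,|S_2|\approx s/2$ costs only a factor $\sim\sqrt s$, by binomial central capture. For a fixed coloring we view $g$ as a function of the red block whose coefficients are functions of the blue block. Given $(\alpha_1,\alpha_2)$, the parent index $\alpha$ is reconstructed exactly, so no multiplicity loss occurs. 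A Blei/Hölder interpolation bounds $\sum|\wh g|^{p_s}$ by a product of two mixed norms, $\ell^{p_{s/2}}$ in one block and $\ell^2$ in the other. The $\ell^2$ (column) part is controlled by the hypercontractive comparison $\|h\|_2\le\rho^{-s/2}\|h\|_{p}$, applied with Potts hypercontractivity at radius $\rho\le(p-1)^{\gamma_K}$ from \cite{PellegrinoRaposo}. Since $p_{s/2}-1\approx 1-2/s$, the factor $(p-1)^{-\gamma_K s/2}$ is bounded by $e^{\gamma_K}$ up to polynomial corrections. The row part is controlled by the inductive constant $B(s/2)$ applied to the red-block functions, which are uniformly bounded by $\norm g_\infty$. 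The weights in the square function are chosen so that the two exponents combine through the strict binary-entropy inequality for balanced two-block degrees, which gives the explicit contraction $\theta\approx0.820773$ uniformly in $K$. Unrolling over $\log_2 s$ steps turns $\theta^{\log s}$ and the accumulated factors $s^{\beta}$ into a polynomial $s^{4\gamma_K+c}$. The $\gamma_K$ enters once per halving from the hypercontractive loss, and the exponents sum geometrically.

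The recursion is only valid for $s$ above a fixed threshold. We therefore close the three lowest levels separately. For these we use the known support-sensitive cyclic Bohnenblust--Hille inequality of \cite{SloteVolbergZhang}, whose exponential-in-$s$ constant is harmless for fixed $s$. We combine it with an interior Bernstein estimate, which controls the coefficients of the restricted functions, and this produces the $K^{5/2}$ dependence.

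The main obstacle is the bootstrap step itself: making the weighted square function yield simultaneous row and column control with an honest contraction $\theta<1$. The difficulty is that Potts hypercontractivity has the exponent $\gamma_K>1/2$ instead of $1/2$. I would first check that the hypercontractive loss enters only as a polynomial factor, because $\rho^{-s}$ with $p-1=1-O(1/s)$ is $O(1)$. I would then verify that the entropy contraction is insensitive to $K$, so that $\gamma_K$ affects only the exponent of $d$ and not $\theta$.
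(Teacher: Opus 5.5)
Your proposal has a genuine gap: the reduction to single interaction layers fails at its first step. Lemma~\ref{lem:projection} does not give $\|f_s\|_\infty\le C(K)d^{a}\|f\|_\infty$ with a fixed small power $a$. It gives $L_{K,r}M^r\|f\|_\infty$, where the power equals the level and the constant depends on $r$, since it extracts the $r$-th Taylor coefficient of $t\mapsto T_t^{\otimes N}f(x)$ via Bernstein. That is polynomial only for bounded $r$, which is exactly why the paper uses it only for $r\le 3$.

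No polynomial substitute exists. Already for $K=2$, take $f(x)=P\bigl(\frac1n\sum_j x_j\bigr)$ with $P=\sum_s\lambda_s t^s$ the degree-$d$ Chebyshev polynomial. Then $\|f\|_\infty\le 1$, but as $n\to\infty$ the sup norm of its level-$s$ part tends to $|\lambda_s|$, and $\sum_s|\lambda_s|=|P(\sqrt{-1})|\ge\bigl((1+\sqrt2)^d-1\bigr)/2$.

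The same obstruction reappears inside your recursion. Even if $g$ lives on one layer, the red-block section $x\mapsto Q_\omega(x,y)$ contains every $x$-order $u=0,\dots,s$. So invoking the single-layer constant $B(s/2)$ requires isolating one bidegree, whose sup norm is not controlled. This is the point of the paper's remark on $\mathcal W_B$. $\Gamma^{(K)}_{M,N}(B)$ is the best constant for the weighted square function of \emph{all} layers simultaneously, so it applies to the entire section $Q(\cdot,y)$ with no projection (Lemma~\ref{lem:row-column}). The Potts damping $\rho_u^v$ is removed afterwards at the uniform cost $3^{2\gamma_K}$ (Lemma~\ref{lem:potts-cost}).

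Even granting that step, unrolling your recursion does not give a polynomial. From $B(s)\le\theta s^{\beta}B(\lceil s/2\rceil)$ you get $\theta^{\log_2 s}\prod_{j<\log_2 s}(s/2^j)^{\beta}=s^{\Theta(\beta\log s)}$. The exponents add arithmetically, not geometrically, and $\theta^{\log_2 s}=s^{-\log_2(1/\theta)}$ recovers only a fixed power. Your $\sqrt s$ loss from exact balancing is one more per-step polynomial loss; the paper's window $[r/4,3r/4]$ has capture probability at least $1/3$. In a recursion between different constants, $\theta<1$ plays no special role.

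The paper has no recursion in the degree at all. It uses $c_*<1$ to absorb one and the same quantity: $\Gamma^{(K)}_{M,N}(B_K)\le C_KM^3+c_*\Gamma^{(K)}_{M,N}(B_K)$ by Corollary~\ref{cor:low} and Proposition~\ref{prop:high-levels}. This is legitimate because $\Gamma^{(K)}_{M,N}$ is finite a priori, thanks to the ambient cutoff $N$. The polynomial power then has three sources:
\begin{itemize}
\item the weights $r^{B_K}$, with $B_K=4\gamma_K+2$ chosen so that the entropy gain $e^{-B_Kh(1/4)}$ beats $3^{2\gamma_K}$ and the remaining constants;
\item the factor $M^3$ from the three base levels;
\item H\"older over the $d$ layers.
\end{itemize}
The missing idea is this all-level weighted square function with self-absorption. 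A degree-halving recursion on single-layer constants does not yield the theorem.
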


The result is for fixed $K$.  The multiplicative constant may depend on $K$,
and the exponent itself grows like $5+\log K$ as $K\to\infty$. The constant $\gamma_K$ is exactly half the reciprocal of the classical complete-graph of $K$ vertices log-Sobolev constant, see \cite{DiaconisSaloffCoste}.

\subsection{Provenance of the ingredients}
\label{subsec:provenance}

Because the proof combines several recent ideas, it is useful to separate
precisely what comes from where.

\paragraph{The Pellegrino--Teixeira architecture.}
The following high-level devices are taken from the weighted stage of
\cite{PellegrinoTeixeira}:
\begin{itemize}
\item the weighted graded square function and its bootstrap constant;
\item simultaneous row and column estimates for all bidegrees;
\item the fractional two-block coefficient interpolation;
\item the variable-angle mixed summation lemma;
\item the conversion of balanced bidegrees into a strict binary-entropy
contraction, followed by bootstrapping argument
\end{itemize}
The Pellegrino--Teixeira paper also supplies the general ancestry philosophy:
a two-block descendant must retain enough information to recover its parent
coefficient before absolute values are taken.  Its actual analytic splitting
scheme is not copied verbatim here.  All estimates needed in the present
support-layer implementation are reproduced below.  We do not use the
Pellegrino--Teixeira dominant-power compression, 
an active coordinate contributes
one support unit, regardless of its local frequency degree.

\paragraph{The Boolean-cube adaptation.}
The passage from analytic homogeneous degree to support layers grading follows the
Boolean companion argument rather than the literal analytic split:
\begin{itemize}
\item random bicoloring is performed on whole active coordinates, not on units
of algebraic exponent;
\item the pair of colored restrictions retains every local degree and
uniquely recovers the parent Fourier index;
\item the number of active coordinates sent to either block is binomial, which
gives a uniform central-capture probability;
\item a finite ambient-dimension cutoff is introduced before bootstrapping, so
finiteness is not assumed circularly;
\item the few low support levels are extracted by a one-variable noise
polynomial and an interior Bernstein estimate.
\end{itemize}
The cyclic case has an asymmetric negative-noise interval, but this interval
still contains zero in its interior.  This is the only modification needed in
the low-level projection argument.

\paragraph{The cyclic-group paper.}
The only genuinely new \emph{high-degree analytic} input imported from
Pellegrino--Raposo Junior \cite{PellegrinoRaposo} is Potts hypercontractivity \cite{DiaconisSaloffCoste}, \cite{Gross}:
\[
 \norm{T_\rho^{\otimes N}g}_2\le\norm g_p
 \quad\text{for}\quad
 0\le\rho\le(p-1)^{\gamma_K}.
\]
Their interaction-order notation and Fourier normalization are also adopted.
The present proof does \emph{not} use their homogenization, symmetric
multilinear form, orbit normalization, mixed polarization, block recurrence,
or asymptotic optimization.

\paragraph{Known information on $L^2\to L^p$ hypercontractivity of the specific noise operator on cyclic group $C_K$.}
There are several different noise operators on functions on $C_K^n$. 
One is $T_\rho^{\otimes n}$, where
$$
T_\rho \chi_j = \rho^{\psi(j)} \chi_j, \quad \psi(j) =\min (j, K-j)\,.
$$
another one is $T_\rho^{\otimes n}$ with just
$$
T_\rho =\rho I +(1-\rho) \mathbb E\,.
$$
The latter one is responsible for the so-called Potts hypercontractivity and its hypercontractive properties are studied in \cite{DiaconisSaloffCoste}, \cite{Gross}.
It is known that $(2,p)$ hypercontractivity happens with 
$$
\rho=(p-1)^{\gamma_K}.
$$

Even though we do not need the first one we list below some known results with $\rho$ that gives hypercontractivity.

The short historical answer is:

For $K=3$, the sharp value is different and comes from the Latała-Oleszkiewicz biased two-point inequality via Wolff's reduction.

$K=4$: Beckner-Janson-Jerison (1983), $(p-1)^{1/2}$ sharp, same as Boolean.

$K=3$: Wolff + Latała-Oleszkiewicz, sharp $L^2\to L^p$, different constant.

General $K$: Junge-Palazuelos-Parcet-Perrin (2017), many optimal $L^q \to L^2$ results for even $q$.

2025–2026 preprints (Yao; Xie–Zhang): claim the $L^q\to L^p$ full sharp constant $\rho=\sqrt{\frac{p-1}{q-1}}, K\ge 4$.

This is actually much closer to the Boolean world than one would have expected twenty years ago.

\paragraph{A small additional base-case input.}
There is one caveat to the phrase "using Potts hypercontractivity". To close the levels
$r=1,2,3$, we also use the already-known dimension-free support-order
Bohnenblust--Hille inequality, only in those three fixed degrees.  This can be
supplied independently by the support-sensitive Hamming-scheme theorem of
Defant--Galicer--Mansilla--Masty\l o--Muro \cite{DefantEtAlHamming}.  No
asymptotic information about its constants is used.  Thus Potts
hypercontractivity is the only cyclic-specific input in the high-degree
bootstrap.

\subsection{Important points in the proof}
\label{subsec:audit}

Before presenting the proof, we record the points at which the argument is
most vulnerable and how they are resolved.

\begin{enumerate}
\item \textbf{Circular finiteness.}
The weighted constant is first defined with an ambient cutoff $N$.  It is
finite by finite dimensionality.  Only after bootstrapping do we obtain a bound
independent of $N$.

\item \textbf{Hypercontractive radius $\rho$.}
The exact radius used below is
$\rho=(p-1)^{\gamma_K}$, with $\gamma_K$ as in
\eqref{eq:gamma-intro}; this is Proposition 3.1 of
\cite{PellegrinoRaposo}.  

\item \textbf{Complex-valued functions.}
The Potts semigroup is positive.  Hence
$\abs{T_\rho g}\le T_\rho\abs g$, so the real hypercontractive estimate
extends with the same constant to complex functions.

\item \textbf{Failure at one-coordinate bidegrees.}
The radius associated with $p_1=1$ vanishes.  We therefore begin the central
bootstrap at total interaction order $4$ and explicitly require both block
orders to be at least $2$.

\item \textbf{Modular collisions.}
A local degree of one variable is never split arithmetically modulo $K$.  The entire
labeled coordinate is sent to one block.  The two restrictions have disjoint
coordinate supports and their union recovers the original frequency vector,
so distinct parent coefficients cannot merge.

\item \textbf{Noncontractive interaction projections.}
The projections $f\mapsto f_r$ need not be $L^\infty$ contractions.  They are
used only for $r\le3$, where the extended Potts kernel is Markov on
$[-1/(K-1),1]$ and an interior Bernstein estimate gives a polynomial cost
$O_K(M^r)$.

\item \textbf{Loss of entropy through the Potts radius.}
Removing the Potts weights costs at most $3^{2\gamma_K}$, uniformly in the
bidegrees.  The weight exponent $B_K=4\gamma_K+2$ compensates for this loss.

\item \textbf{Numerical constant that allows us to use bootstrapping.}
The resulting contraction is bounded by
\[
 c_*=0.8207720540\ldots<1
\]
uniformly in $K$.  The exact logarithmic estimate is displayed in
Section \ref{sec:absorption}.

\item \textbf{Interaction order versus total degree.}
The theorem is first proved for interaction order.  The total-degree
corollary uses the canonical Fourier representatives
$\alpha_j\in\{0,\ldots,K-1\}$ and the elementary inequality
$s(\alpha)\le\sum_j\alpha_j$.
\end{enumerate}


\section{Fourier notation and the constants}

For $f:C_K^N\to\C$, write
\begin{equation}
\label{eq:fourier}
 f(x)=\sum_{\alpha\in\{0,\ldots,K-1\}^N}
 \wh f(\alpha)\chi_\alpha(x),
 \qquad
 \wh f(\alpha)=\E_x f(x)\overline{\chi_\alpha(x)}.
\end{equation}
The characters form an orthonormal basis, so Parseval gives
\begin{equation}
\label{eq:parseval}
 \norm f_2^2=\sum_\alpha\abs{\wh f(\alpha)}^2.
\end{equation}
Decompose $f$ into exact interaction layers:
\begin{equation}
\label{eq:layers}
 f=\sum_{r=0}^N f_r,
 \qquad
 f_r=\sum_{s(\alpha)=r}\wh f(\alpha)\chi_\alpha.
\end{equation}
For $r\ge1$, put
\begin{equation}
\label{eq:pr-Ar}
 p_r=\frac{2r}{r+1},
 \qquad
 A_r(f)=
 \left(\sum_{s(\alpha)=r}
 \abs{\wh f(\alpha)}^{p_r}\right)^{1/p_r}.
\end{equation}

For $d\ge1$, let $\BH^{\mathrm{int}}_{d,K}$ be the least dimension-free
constant such that
\begin{equation}
\label{eq:BH-int-def}
 \left(\sum_{s(\alpha)\le d}
 \abs{\wh f(\alpha)}^{p_d}\right)^{1/p_d}
 \le \BH^{\mathrm{int}}_{d,K}\norm f_\infty
\end{equation}
for all functions supported on interaction orders at most $d$.
Using the canonical representatives, define
\begin{equation}
\label{eq:total-degree}
 \abs\alpha=\alpha_1+\cdots+\alpha_N
\end{equation}
and define $\BH^{\deg}_{d,K}$ analogously for Fourier support in
$\abs\alpha\le d$.  Since $s(\alpha)\le\abs\alpha$,
\begin{equation}
\label{eq:degree-comparison}
 \BH^{\deg}_{d,K}\le\BH^{\mathrm{int}}_{d,K}.
\end{equation}

\section{Analytic inputs}

\subsection{Potts hypercontractivity}

For $0\le\rho\le1$, define
\begin{equation}
\label{eq:potts-noise}
 (T_\rho g)(a)=\rho g(a)+(1-\rho)\E g,
 \qquad a\in C_K.
\end{equation}
On the product, the coordinate operators commute and
\begin{equation}
\label{eq:potts-multiplier}
 T_\rho^{\otimes N}\chi_\alpha
 =\rho^{s(\alpha)}\chi_\alpha.
\end{equation}

\begin{theorem}[Potts hypercontractivity]
\label{thm:potts}
Let $1<p\le2$.  For every $N\ge1$ and every complex-valued
$g:C_K^N\to\C$,
\begin{equation}
\label{eq:potts-hc}
 \norm{T_\rho^{\otimes N}g}_2\le\norm g_p
 \qquad\text{whenever}\qquad
 0\le\rho\le(p-1)^{\gamma_K},
\end{equation}
where $\gamma_K$ is given by \eqref{eq:gamma-intro}.  This is
\cite[Proposition 3.1]{PellegrinoRaposo}.
\end{theorem}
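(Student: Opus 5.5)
The plan is to derive \eqref{eq:potts-hc} from a logarithmic Sobolev inequality together with Gross's equivalence theorem. I would not try to prove hypercontractivity directly on $C_K^N$.

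\textbf{Step 1: write the noise as a semigroup.} Put $L=I-\E$ on $C_K$. Since $\E$ is a projection, $e^{-tL}=e^{-t}I+(1-e^{-t})\E$. Hence $T_\rho=e^{-tL}$ with $\rho=e^{-t}$. On the product this gives $T_\rho^{\otimes N}=e^{-t\mathcal L_N}$ with $\mathcal L_N=\sum_{j}L_j$, where $L_j$ acts on the $j$-th coordinate. This is the Markov semigroup of the chain that jumps to a uniformly random point of $C_K$, so it is the complete graph on $K$ vertices with loops. Its Dirichlet form is $\mathcal E(g,g)=\langle Lg,g\rangle=\operatorname{Var}(g)$.

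\textbf{Step 2: one-site log-Sobolev constant and tensorization.} I would invoke the Diaconis--Saloff-Coste computation \cite{DiaconisSaloffCoste}. For the kernel $K(x,y)=\pi(y)$ with $\pi$ uniform on $K$ points, the log-Sobolev constant is
\[
\alpha_K=\frac{1-2/K}{\log(K-1)}=\frac{K-2}{K\log(K-1)}\qquad(K\ge3).
\]
For $K=2$ one has $\alpha_2=\tfrac12$, which is the limiting value and also the Bonami--Gross two-point constant. Here $\alpha$ is normalized by
\[
\alpha\operatorname{Ent}_\pi(g^2)\le \mathcal E(g,g).
\]
The log-Sobolev inequality tensorizes with the same constant, because entropy is subadditive over product measures and the Dirichlet forms add. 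Therefore $\mathcal L_N$ satisfies the inequality with constant $\alpha_K$, uniformly in $N$.

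\textbf{Step 3: apply Gross's theorem with $q=2$.} Gross's theorem \cite{Gross} says that a log-Sobolev inequality with constant $\alpha$ implies
\[
\|e^{-t\mathcal L_N}g\|_q\le\|g\|_p \quad\text{whenever}\quad \frac{q-1}{p-1}\le e^{4\alpha t},\qquad 1<p\le q<\infty .
\]
Taking $q=2$ turns the condition into $e^{-t}\le (p-1)^{1/(4\alpha_K)}$. Now $1/(4\alpha_K)=K\log(K-1)/(4(K-2))=\gamma_K$, and for $K=2$ it equals $1/2$. So the threshold is exactly $\rho\le(p-1)^{\gamma_K}$. For smaller $\rho$ nothing new is needed: the condition is monotone in $t$, or equivalently one composes with the $L^2$-contraction $T_{\rho'}$, $\rho'\le1$. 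This gives the estimate for real $g$, and in fact for $g\ge0$.

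\textbf{Step 4: complex functions.} The operator $T_\rho^{\otimes N}$ is an average against a probability kernel, so $|T_\rho^{\otimes N}g|\le T_\rho^{\otimes N}|g|$ pointwise. Hence
\[
\|T_\rho^{\otimes N} g\|_2\le\|T_\rho^{\otimes N}|g|\|_2\le\||g|\|_p=\|g\|_p .
\]

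\textbf{Main obstacle.} The step I expect to need the most care is matching normalizations. Diaconis--Saloff-Coste's $\alpha$, the factor $4$ in Gross's criterion, and the time scaling in $\rho=e^{-t}$ must all use the same conventions, since these are exactly what turn $\alpha_K$ into the exponent $\gamma_K$. I would fix the conventions by checking the case $K=2$ against the known $\rho=\sqrt{p-1}$.
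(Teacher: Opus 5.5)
Your proposal is correct and is essentially the argument the paper relies on. The paper gives no independent proof: it cites \cite[Proposition 3.1]{PellegrinoRaposo} and attributes the radius to the Diaconis--Saloff-Coste log-Sobolev constant of the complete graph combined with Gross's theorem, which is exactly your Steps 1--3 (your normalization $\gamma_K=1/(4\alpha_K)$, checked against $K=2$, is the right one), and it extends to complex $g$ by the same positivity bound $\abs{T_\rho g}\le T_\rho\abs g$ that you use in Step 4.
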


For $u\ge2$, the radius needed later is (with $p_u=\frac{2u}{u+1}$)
\begin{equation}
\label{eq:rho-u}
 \rho_u=(p_u-1)^{\gamma_K}
 =\left(\frac{u-1}{u+1}\right)^{\gamma_K}.
\end{equation}

\subsection{Fixed low interaction levels}

The support-sensitive theorem of
Defant--Galicer--Mansilla--Masty\l o--Muro gives, for fixed $K$, an
exponential-in-$r$ dimension-free bound on interaction order at most $r$
\cite[Theorem 2.13]{DefantEtAlHamming}.  We use only $r=1,2,3$.

\begin{lemma}[Three fixed base levels]
\label{lem:base-levels}
For each fixed $K\ge2$ and each $r\in\{1,2,3\}$, there is a constant
$\beta_{K,r}<\infty$, independent of $N$, such that every exact
interaction-$r$ function $P$ satisfies
\begin{equation}
\label{eq:base-levels}
 A_r(P)\le\beta_{K,r}\norm P_\infty.
\end{equation}
\end{lemma}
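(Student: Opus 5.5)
The plan is to deduce the lemma directly from the support-sensitive dimension-free Bohnenblust--Hille theorem of Defant--Galicer--Mansilla--Masty\l o--Muro \cite[Theorem 2.13]{DefantEtAlHamming}. That theorem gives, for fixed $K$ and each $r$, a constant $D_{K,r}<\infty$ independent of $N$ with
\[
 \Bigl(\sum_{s(\alpha)\le r}\abs{\wh g(\alpha)}^{p_r}\Bigr)^{1/p_r}\le D_{K,r}\norm g_\infty
\]
for all $g:C_K^N\to\C$ whose Fourier support lies in $\{s(\alpha)\le r\}$. We only need $r\in\{1,2,3\}$, so the size of $D_{K,r}$ as a function of $r$ is irrelevant.

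The main step is to apply this to an exact interaction-$r$ function $P$. Such a $P$ has $\wh P(\alpha)=0$ unless $s(\alpha)=r$, so it lies in the admissible class. Its left-hand side coincides with $A_r(P)$ as defined in \eqref{eq:pr-Ar}, because the exponent there is exactly $p_r$ and the sum over $s(\alpha)\le r$ reduces to the sum over $s(\alpha)=r$. Hence $A_r(P)\le D_{K,r}\norm P_\infty$, and we may take $\beta_{K,r}:=D_{K,r}$.

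The only point needing care is translating between conventions. I will check three things.
\begin{enumerate}
\item The grading in \cite{DefantEtAlHamming} (support size, or Hamming weight, in $\Z_K^N$) is the same as $s(\alpha)$.
\item Their Fourier normalization matches \eqref{eq:fourier}.
\item Their exponent is $2r/(r+1)$, or at least no smaller.
\end{enumerate}
If their normalization differs by a fixed factor, or their exponent is $q\ge p_r$, the constant changes only by a factor depending on $K$ and $r$. For instance, if their exponent is some $q\ge p_r$, I will pass from $\ell^q$ to $\ell^{p_r}$ using an $N$-free bound. If instead only a degree-graded version is available, the fallback is to apply the cyclic Bohnenblust--Hille inequality of \cite{SloteVolbergZhang} or \cite{BeckerKleinSloteVolbergZhang} in total degree. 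An exact interaction-$r$ function has total degree at most $(K-1)r$, which gives a dimension-free $\ell^{2(K-1)r/((K-1)r+1)}$ bound. That exponent is larger than $p_r$, however, so this route does not directly give $A_r$. This is why the support-sensitive statement is the input I would rely on, and the verification of the conventions is the main (though mild) obstacle.
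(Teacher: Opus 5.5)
Your proposal is correct and follows the paper's route exactly: the lemma is obtained by invoking the support-sensitive dimension-free inequality \cite[Theorem 2.13]{DefantEtAlHamming} only at $r=1,2,3$, and for an exact interaction-$r$ function its left-hand side is precisely $A_r(P)$. One slip in your contingency remarks is that the monotonicity is reversed: the harmless case is an exponent $q\le p_r$ (since then $\norm{c}_{\ell^{p_r}}\le\norm{c}_{\ell^{q}}$), whereas a bound with $q>p_r$ does not yield an $N$-free bound on $A_r(P)$, as you yourself note in the fallback paragraph.
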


\subsection{An interior spectral-projection estimate}

We use  the elementary fact that the  operator $T_\rho$ of Potts hypercontractivity remains Markov on a
certain real interval that contains $0$ strictly inside.

\begin{lemma}[The negative Potts interval]
\label{lem:negative-interval}
For every
\begin{equation}
\label{eq:IK}
 t\in I_K:=\left[-\frac1{K-1},1\right],
\end{equation}
the one-coordinate operator
\[
 T_t=\Pi+t(I-\Pi),
 \qquad \Pi g=(\E g)\mathbf 1,
\]
is a Markov operator (averaging with transition probabilities).  Consequently,
\begin{equation}
\label{eq:negative-Linf}
 \norm{T_t^{\otimes N}f}_\infty\le\norm f_\infty
 \qquad(t\in I_K).
\end{equation}
\end{lemma}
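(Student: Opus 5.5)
We compute the transition kernel of $T_t$ on the single coordinate $C_K$ explicitly and check that it is nonnegative with unit row sums. Since $\Pi g(a)=\frac1K\sum_{b\in C_K}g(b)$, we can write
\[
 (T_tg)(a)=t\,g(a)+(1-t)\frac1K\sum_{b}g(b)=\sum_b k_t(a,b)g(b),
\]
with kernel
\[
 k_t(a,a)=t+\frac{1-t}{K},\qquad k_t(a,b)=\frac{1-t}{K}\quad(b\ne a).
\]
Each row sum is $t+(1-t)=1$. The off-diagonal entries are nonnegative exactly when $t\le1$. The diagonal entry equals $\frac{1+(K-1)t}{K}$, which is nonnegative exactly when $t\ge-\frac1{K-1}$. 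Hence for $t\in I_K$ the kernel is a stochastic matrix, which is precisely the claim that $T_t$ is a Markov operator. This is consistent with the $\rho\in[0,1]$ case of \eqref{eq:potts-noise}: the interval extends to negative $t$ down to the point where the diagonal weight vanishes.

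Next we pass to the product. The tensor product of stochastic kernels is stochastic:
\[
 T_t^{\otimes N}f(x)=\sum_{y}\prod_{j=1}^N k_t(x_j,y_j)\,f(y).
\]
The weights $\prod_j k_t(x_j,y_j)$ are nonnegative, and summing over $y$ factorizes into $\prod_j\sum_{y_j}k_t(x_j,y_j)=1$. Therefore
\[
 \abs{T_t^{\otimes N}f(x)}\le\sum_y\prod_j k_t(x_j,y_j)\abs{f(y)}\le\norm f_\infty
\]
for each $x$, which gives \eqref{eq:negative-Linf}. This bound holds for complex-valued $f$ as well, because we only used positivity of the weights and the triangle inequality.

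There is no real obstacle in this argument. The only point that needs care is the sign bookkeeping on the diagonal entry, which is what pins down the endpoint $-1/(K-1)$. It is also worth recording, for the later interior Bernstein step, that $T_t^{\otimes N}\chi_\alpha=t^{s(\alpha)}\chi_\alpha$ remains valid for all real $t$. This follows because $\Pi\chi_j=0$ for $j\ne0$ and $\Pi\mathbf 1=\mathbf 1$.
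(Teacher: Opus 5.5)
Your proof is correct and is the paper's argument: you compute the explicit kernel $t\mathbf 1_{\{a=b\}}+(1-t)/K$, check that it is nonnegative exactly for $t\in I_K$ with unit row sums, and pass to $T_t^{\otimes N}$ via the product kernel. You also write out the product kernel and the triangle-inequality step for complex $f$, which the paper only summarizes as ``tensor products preserve positivity and row sums.''
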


\begin{proof}
The transition kernel is
\begin{equation}
\label{eq:negative-kernel}
 \mathcal K_t(a,b)
 =t\mathbf 1_{\{a=b\}}+\frac{1-t}{K}.
\end{equation}
Its diagonal entries are
$[1+(K-1)t]/K$, its off-diagonal entries are $(1-t)/K$, and every row sums
to one.  These entries are nonnegative exactly for
$-1/(K-1)\le t\le1$.  Tensor products preserve positivity and row sums.
\end{proof}

\medskip

\begin{remark}
The next lemma is very simple but very important. It suggests to split the polynomial on $C_K$ not by the order of homogeneity,
but by the order of the size of the support of monomials. This idea first appeared in \cite{SloteVolbergZhang}, 
but the estimate of the spectral projections on such fixed support size peices of the polynomial was much worse than below. 
This idea is also used in \cite{DefantEtAlHamming}.
\end{remark}

\medskip

\begin{lemma}[Fixed-order interaction projection]
\label{lem:projection}
Fix $K\ge2$ and an integer $r\ge1$.  There is a constant
$L_{K,r}<\infty$ such that, whenever $f:C_K^N\to\C$ has interaction order
at most $M$,
\begin{equation}
\label{eq:projection}
 \norm{f_r}_\infty\le L_{K,r}M^r\norm f_\infty.
\end{equation}
\end{lemma}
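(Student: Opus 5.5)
The plan is to realize $f_r$ as a coefficient of a one-variable polynomial built from the extended Potts semigroup of Lemma~\ref{lem:negative-interval}, and then extract that coefficient with a Bernstein/Markov-type inequality on a neighbourhood of $0$ inside $I_K$.

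\textbf{Step 1: the noise polynomial.} Since $T_t=\Pi+t(I-\Pi)$ multiplies a nonconstant one-coordinate character by $t$ and fixes constants, \eqref{eq:potts-multiplier} extends to
\[
 T_t^{\otimes N}\chi_\alpha=t^{s(\alpha)}\chi_\alpha
\]
for all real $t$. Fix $x\in C_K^N$ and set
\[
 Q_x(t)=(T_t^{\otimes N}f)(x)=\sum_{r=0}^{M} t^r f_r(x).
\]
This is a polynomial in $t$ of degree at most $M$, because $f$ has interaction order at most $M$. By \eqref{eq:negative-Linf},
\[
 \sup_{t\in I_K}\abs{Q_x(t)}\le\norm f_\infty .
\]
Moreover $f_r(x)=Q_x^{(r)}(0)/r!$.

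\textbf{Step 2: coefficient extraction.} The point $0$ is interior to $I_K=[-1/(K-1),1]$. Let $\delta=1/(K-1)$, so that $[-\delta,\delta]\subset I_K$. Rescale by $t=\delta u$, which gives a polynomial $R(u)=Q_x(\delta u)$ bounded by $\norm f_\infty$ on $[-1,1]$. Its $r$-th coefficient is $\delta^r f_r(x)$.

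I will bound this coefficient through derivatives at the centre rather than Chebyshev extremality, since coefficient bounds for all of $[-1,1]$ grow exponentially in $M$ whereas derivatives at $0$ grow only polynomially. Bernstein's inequality at the interior point $u=0$ gives
\[
 \abs{P'(0)}\le \frac{M\norm P_{[-1,1]}}{\sqrt{1-0^2}}=M\norm P_{[-1,1]}.
\]
To iterate, note that $P^{(j)}$ has degree at most $M-j$. Apply Bernstein on a shrinking family of intervals: first on $[-1/2,1/2]$ by Bernstein with the weight $\sqrt{1-u^2}\ge\sqrt3/2$, then rescale. Alternatively, apply Markov's inequality on subintervals $[-2^{-j},2^{-j}]$, carefully at interior points.

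The cleanest route is the following. For $\abs u\le 1/2$, Bernstein gives
\[
 \abs{P'(u)}\le \tfrac{2}{\sqrt3}M\norm P_{[-1,1]}.
\]
Then $P'$ on $[-1/2,1/2]$ is controlled by $CM\norm P$. Rescaling $[-1/2,1/2]$ to $[-1,1]$ and applying Bernstein again yields
\[
 \abs{P''(u)}\le C^2 M^2 \norm P\quad\text{on }[-1/4,1/4],
\]
with $C=4/\sqrt3$ absorbing the factor $2$ from the rescaling. After $r$ steps,
\[
 \abs{P^{(r)}(0)}\le C^r M^r\norm P_{[-1,1]}.
\]

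\textbf{Step 3: conclusion and the main obstacle.} The main step is ensuring this iteration loses only a factor $C^r$ (depending on $r$) and not something growing with $M$. Each Bernstein step costs a factor $M$ times an $r$-dependent geometric constant, which is exactly what is allowed.

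Unwinding the rescaling gives
\[
 \abs{f_r(x)}=\frac{\abs{R^{(r)}(0)}}{r!\,\delta^r}\le \frac{C^r (K-1)^r}{r!}M^r\norm f_\infty .
\]
Taking the supremum over $x$ proves \eqref{eq:projection} with
\[
 L_{K,r}=\frac{(4(K-1)/\sqrt3)^r}{r!}.
\]
For complex $f$, Bernstein's and Markov's inequalities hold for complex-coefficient polynomials on a real interval, so no positivity issue arises. The case $K=2$ is included, since then $I_2=[-1,1]$.
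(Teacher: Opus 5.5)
Your argument is essentially the paper's: the same noise polynomial $t\mapsto T_t^{\otimes N}f(x)$, bounded by $\|f\|_\infty$ on the Markov interval $I_K$, with $f_r(x)$ read off as its $r$-th Taylor coefficient at the interior point $0$ via Bernstein. The only difference is how you get the derivative bound: you iterate the algebraic Bernstein inequality on halving windows inside $[-1/(K-1),1/(K-1)]$, whereas the paper maps all of $I_K$ onto $[-1,1]$, substitutes $s=\cos\theta$ to use trigonometric Bernstein, and differentiates through $\arccos$ at the image point $s_K=-(K-2)/K$; keeping the whole interval also gives a better $K$-dependence than your $(K-1)^r$, which is irrelevant for fixed $K$.

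There is one bookkeeping slip in your constant. The $j$-th step rescales a window of half-length $2^{-(j-1)}$, so it costs $2^{j}M/\sqrt3$ rather than a fixed $CM$, and your iteration actually yields $2^{r(r+1)/2}3^{-r/2}M^r$. This is at most $(4/\sqrt3)^rM^r$ only for $r\le3$. The slip is harmless, since the lemma only asks for some $L_{K,r}$ and the paper uses only $r\le3$.
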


\begin{proof}
Fix $x\in C_K^N$ and form the one-variable polynomial
\begin{equation}
\label{eq:Px}
 P_x(t)=T_t^{\otimes N}f(x)
 =\sum_{j=0}^M t^j f_j(x).
\end{equation}
By Lemma \ref{lem:negative-interval},
\begin{equation}
\label{eq:Px-bound}
 \norm{P_x}_{L_\infty(I_K)}\le\norm f_\infty.
\end{equation}
Apply an affine change of variable from $I_K$ to $[-1,1]$.  The point $t=0$
is sent to
\[
 s_K=-\frac{K-2}{K}\in(-1,1).
\]
If $Q_x$ denotes the transformed polynomial and
$G_x(\theta)=Q_x(\cos\theta)$, then $G_x$ is a trigonometric polynomial of
degree at most $M$.  Bernstein's inequality gives
\[
 \norm{G_x^{(j)}}_\infty\le M^j\norm{G_x}_\infty
 \qquad(j\ge1).
\]
Since $s_K$ is a fixed interior point for fixed $K$, repeated differentiation
of $Q_x(s)=G_x(\arccos s)$ at $s=s_K$ gives
\[
 \abs{Q_x^{(r)}(s_K)}
 \le C_{K,r}M^r\norm{Q_x}_{L_\infty[-1,1]}.
\]
Undoing the affine change of variable and using
$P_x^{(r)}(0)=r!f_r(x)$ proves \eqref{eq:projection}.
\end{proof}

\begin{corollary}[Low-level closure]
\label{cor:low}
For every fixed $K\ge2$ and every $B>0$, there is a constant
$C_{K,B}<\infty$ such that every $f$ of interaction order at most $M$
satisfies
\begin{equation}
\label{eq:low}
 \left[
  \sum_{r=1}^{\min\{3,M\}}
  \frac{A_r(f)^2}{r^{2B}}
 \right]^{1/2}
 \le C_{K,B}M^3\norm f_\infty.
\end{equation}
\end{corollary}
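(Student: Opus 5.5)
The plan is to handle each of the three levels $r\in\{1,2,3\}$ separately (only $r\le\min\{3,M\}$) and then sum. For a fixed $r$, the exact interaction layer $f_r$ is itself an exact interaction-$r$ function with $\wh{f_r}(\alpha)=\wh f(\alpha)$ whenever $s(\alpha)=r$. Consequently $A_r(f)=A_r(f_r)$. Lemma \ref{lem:base-levels} applied to $P=f_r$ then gives
\[
 A_r(f)=A_r(f_r)\le\beta_{K,r}\norm{f_r}_\infty .
\]
Next, Lemma \ref{lem:projection} controls the $L^\infty$ norm of the projection: since $f$ has interaction order at most $M$,
\[
 \norm{f_r}_\infty\le L_{K,r}M^r\norm f_\infty\le L_{K,r}M^3\norm f_\infty ,
\]
using $M\ge1$ and $r\le3$. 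Combining the two displays yields $A_r(f)\le\beta_{K,r}L_{K,r}M^3\norm f_\infty$ for each $r\le\min\{3,M\}$.

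To finish, the weights satisfy $r^{-2B}\le1$ for $r\ge1$ and $B>0$. Hence the bracketed sum in \eqref{eq:low} is at most
\[
 \sum_{r=1}^{3}\beta_{K,r}^2L_{K,r}^2M^6\norm f_\infty^2 .
\]
Taking square roots gives \eqref{eq:low} with
\[
 C_{K,B}=\Bigl(\sum_{r=1}^3\beta_{K,r}^2L_{K,r}^2\Bigr)^{1/2}.
\]
This constant in fact does not depend on $B$, and one may keep the factors $r^{-B}$ to improve it if desired. It is finite because only three fixed levels occur, and it is independent of $N$ and $M$ because both $\beta_{K,r}$ and $L_{K,r}$ are.

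There is no real obstacle here, since the corollary is a direct combination of the two preceding lemmas. The only point to check is the identity $A_r(f)=A_r(f_r)$ together with the fact that $f_r$ qualifies as an \emph{exact} interaction-$r$ function in Lemma \ref{lem:base-levels}. Both follow immediately from \eqref{eq:layers}. The case $M<3$ is harmless: the sum simply truncates, and $M^r\le M^3$ still holds because $M\ge1$.
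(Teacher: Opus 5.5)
Your proposal is correct and is essentially the paper's proof: for each $r\le\min\{3,M\}$ you chain $A_r(f)=A_r(f_r)\le\beta_{K,r}\norm{f_r}_\infty\le\beta_{K,r}L_{K,r}M^r\norm f_\infty$ through Lemmas \ref{lem:base-levels} and \ref{lem:projection}, then use $M^r\le M^3$ and $r^{-2B}\le1$ over at most three terms. Writing out the explicit constant $C_{K,B}=\bigl(\sum_{r=1}^3\beta_{K,r}^2L_{K,r}^2\bigr)^{1/2}$, and noting that it does not depend on $B$, only makes precise what the paper leaves implicit.
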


\begin{proof}
For $r=1,2,3$, Lemmas \ref{lem:base-levels} and \ref{lem:projection} give
\[
 A_r(f)=A_r(f_r)
 \le\beta_{K,r}\norm{f_r}_\infty
 \le\beta_{K,r}L_{K,r}M^r\norm f_\infty.
\]
There are only three terms, and $M^r\le M^3$.
\end{proof}

\section{Mixed-norm tools}

The next two lemmas are numerical.  They do not depend on the underlying
group.

\begin{lemma}[Mixed-angle summation]
\label{lem:mixed-angle}
Let $0<a<1/2$.  Suppose $u_i,v_i\ge0$ satisfy
$\sum_i u_i\le1$ and $\sum_i v_i\le1$, and let
$\theta_i\in[a,1-a]$.  Then
\begin{equation}
\label{eq:mixed-angle}
 \sum_i u_i^{\theta_i}v_i^{1-\theta_i}
 \le 2a^a(1-a)^{1-a}
 =2\exp\{-h(a)\},
\end{equation}
where, as in \cite[Lemma 5.2]{PellegrinoTeixeira},
\begin{equation}
\label{eq:entropy}
 h(t)=-t\log t-(1-t)\log(1-t).
\end{equation}
\end{lemma}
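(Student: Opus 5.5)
The plan is to prove \eqref{eq:mixed-angle} one index at a time, with a constant that depends only on the angle $\theta_i$, and then sum using $\sum_i u_i\le1$ and $\sum_i v_i\le1$. No Hölder inequality across indices is needed. This matters because the angles $\theta_i$ vary with $i$, so a single Hölder pairing with fixed exponents is not available.

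\emph{Step 1 (pointwise weighted AM--GM).} Fix $i$ and write $\theta=\theta_i\in[a,1-a]\subset(0,1)$. I would factor
\[
 u_i^{\theta}v_i^{1-\theta}
 =\Bigl(\frac{u_i}{\theta}\Bigr)^{\theta}\Bigl(\frac{v_i}{1-\theta}\Bigr)^{1-\theta}\,\theta^{\theta}(1-\theta)^{1-\theta}.
\]
Weighted AM--GM, $x^\theta y^{1-\theta}\le\theta x+(1-\theta)y$ for $x,y\ge0$, is then applied with $x=u_i/\theta$ and $y=v_i/(1-\theta)$. The first factor is therefore at most $u_i+v_i$. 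Since $\theta^\theta(1-\theta)^{1-\theta}=\exp\{-h(\theta)\}$ with $h$ as in \eqref{eq:entropy}, this gives
\[
 u_i^{\theta_i}v_i^{1-\theta_i}\le e^{-h(\theta_i)}(u_i+v_i).
\]
The choice of weights $1/\theta$ and $1/(1-\theta)$ is exactly the one that makes AM--GM tight in the constant, which is why the entropy appears.

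\emph{Step 2 (uniform entropy bound).} The binary entropy $h$ is concave on $[0,1]$ and symmetric about $1/2$. Hence on $[a,1-a]$ its minimum is attained at the endpoints, with value $h(a)=h(1-a)$. So $e^{-h(\theta_i)}\le e^{-h(a)}=a^a(1-a)^{1-a}$ for every $i$.

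\emph{Step 3 (summation).} Summing Step 1 over $i$ and using Step 2 gives
\[
 \sum_i u_i^{\theta_i}v_i^{1-\theta_i}\le e^{-h(a)}\Bigl(\sum_i u_i+\sum_i v_i\Bigr)\le 2e^{-h(a)}.
\]
This is \eqref{eq:mixed-angle}.

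The only point requiring care is Step 1. A naive bound such as $u^\theta v^{1-\theta}\le\max(u,v)\le u+v$ would lose the factor $e^{-h(a)}<1$, and that factor is precisely what is needed later for the strict contraction. So the correct normalization of the AM--GM weights is the key step; once it is chosen, the rest is immediate.
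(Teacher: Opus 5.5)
Your proof is correct, and it takes a genuinely different and shorter route than the paper's.

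The paper's argument runs as follows:
\begin{enumerate}
\item It splits the indices into $I_+=\{u_i\ge v_i\}$ and $I_-=\{u_i<v_i\}$.
\item It uses monotonicity of $\theta\mapsto u_i^{\theta}v_i^{1-\theta}$ to push each angle to the endpoint $1-a$ or $a$.
\item It applies H\"older with these two fixed exponent pairs on each part.
\item It enlarges the partial masses so that $U_++U_-=V_++V_-=1$.
\item It applies H\"older a second time to reduce to $(1+\delta)^{1-a}(1-\delta)^a$ with $\delta=U_+-V_+$, and maximizes at $\delta=1-2a$.
\end{enumerate}
So the obstacle you point out is real: there is no single H\"older pairing when the $\theta_i$ vary. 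The paper gets around it with the case split.

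Your pointwise weighted AM--GM, with weights $1/\theta_i$ and $1/(1-\theta_i)$, replaces the case split, the mass-enlargement step and the one-variable optimization with a single inequality. It also gives a slightly stronger intermediate bound, $\sum_i e^{-h(\theta_i)}(u_i+v_i)$. Each index keeps its own entropy factor, and only the combined constraint $\sum_i u_i+\sum_i v_i\le 2$ is needed, rather than the two separate ones.

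The equality cases are just as visible in your version. Equality holds when $u_i/\theta_i=v_i/(1-\theta_i)$ and $\theta_i\in\{a,1-a\}$, for example $u=(1-a,a)$, $v=(a,1-a)$, $\theta=(1-a,a)$. This shows the constant $2e^{-h(a)}$ cannot be improved. The paper's route reaches the same extremizer through $\delta=1-2a$, but it gives no extra generality for the use of the lemma in the central entropy contraction.
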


\begin{proof}
Split the indices into $I_+=\{i:u_i\ge v_i\}$ and
$I_-=\{i:u_i<v_i\}$.  On $I_+$,
$u_i^{\theta_i}v_i^{1-\theta_i}\le u_i^{1-a}v_i^a$, while on $I_-$,
$u_i^{\theta_i}v_i^{1-\theta_i}\le u_i^av_i^{1-a}$.  H\"older's inequality
gives
\[
 \sum_i u_i^{\theta_i}v_i^{1-\theta_i}
 \le U_+^{1-a}V_+^a+U_-^aV_-^{1-a},
\]
where $U_\pm,V_\pm$ are the corresponding partial masses.  Enlarging unused
mass, assume $U_++U_-=V_++V_-=1$.  Write $U_+=x$ and $V_+=y$, with
$0\le y\le x\le1$.  A second application of H\"older yields
\[
 x^{1-a}y^a+(1-x)^a(1-y)^{1-a}
 \le(1+x-y)^{1-a}(1-x+y)^a.
\]
With $\delta=x-y\in[0,1]$, the last expression is
$(1+\delta)^{1-a}(1-\delta)^a$.  Its maximum occurs at
$\delta=1-2a$ and equals $2a^a(1-a)^{1-a}$.
\end{proof}

For a finite array $c=(c_{\alpha,\beta})$, define
\begin{align}
 X_{u,v}(c)
 &=\left[
   \sum_\alpha
   \left(\sum_\beta\abs{c_{\alpha,\beta}}^2\right)^{p_u/2}
  \right]^{1/p_u},
 \label{eq:X-def}\\
 Y_{u,v}(c)
 &=\left[
   \sum_\beta
   \left(\sum_\alpha\abs{c_{\alpha,\beta}}^2\right)^{p_v/2}
  \right]^{1/p_v}.
 \label{eq:Y-def}
\end{align}

\begin{lemma}[Two-block coefficient interpolation]
\label{lem:two-block}
Let $u,v\ge1$, set $r=u+v$, and let $c=(c_{\alpha,\beta})$ be a finite
scalar array.  Then
\begin{equation}
\label{eq:two-block}
 \left(\sum_{\alpha,\beta}
       \abs{c_{\alpha,\beta}}^{p_r}\right)^{1/p_r}
 \le X_{u,v}(c)^{u/r}Y_{u,v}(c)^{v/r}.
\end{equation}
\end{lemma}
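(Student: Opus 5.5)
We prove the inequality by H\"older's inequality, splitting each term $\abs{c_{\alpha,\beta}}^{p_r}$ into two factors matched to the two mixed norms, after which the $\ell^2$ inner sums are handled by Minkowski's inequality.

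\textbf{Exponents.} With $p_u=2u/(u+1)$, $p_v=2v/(v+1)$, $p_r=2r/(r+1)$ and $r=u+v$, we have
\[
 \frac{1}{p_r}=\frac{r+1}{2r}
 =\frac{u}{r}\cdot\frac{u+1}{2u}+\frac{v}{r}\cdot\frac{v+1}{2v}
 =\frac{u/r}{p_u}+\frac{v/r}{p_v},
\]
because $(u+1)+(v+1)=r+2$ while $r+1$ is needed; so the weights have to be checked carefully. Set $\theta=u/r$. Then
\[
 \frac{\theta}{p_u}+\frac{1-\theta}{p_v}
 =\frac{u+1}{2r}+\frac{v+1}{2r}
 =\frac{r+2}{2r}\ne\frac{1}{p_r}.
\]
Hence plain complex interpolation between the $\ell^{p_u}(\ell^2)$ and $\ell^{p_v}(\ell^2)$ spaces, transposed, does not land on $\ell^{p_r}$. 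The inner index $2$ must also be accounted for.

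\textbf{Mixed-norm interpolation.} Instead, interpolate the mixed norms $\ell^{p_u}_\alpha(\ell^2_\beta)$ and $\ell^{p_v}_\beta(\ell^2_\alpha)$, the latter viewed, after Minkowski, as a norm on functions of $(\alpha,\beta)$. Put $1/P=\theta/p_u+(1-\theta)/2$ for the outer index in $\alpha$ and $1/Q=\theta/2+(1-\theta)/p_v$ for the index in $\beta$. Then
\[
 \frac1P=\frac{u+1}{2r}+\frac{v}{2r}=\frac{r+1}{2r}=\frac1{p_r},
\]
and similarly $1/Q=1/p_r$. So the interpolated space is exactly $\ell^{p_r}_\alpha(\ell^{p_r}_\beta)=\ell^{p_r}$, and this is the statement (this is the Blei-type inequality underlying \cite[Section 5]{PellegrinoTeixeira}).

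\textbf{Elementary H\"older argument.} To avoid abstract interpolation of mixed-norm spaces with swapped order, I would argue directly. Write
\[
 \abs{c_{\alpha,\beta}}^{p_r}=\abs{c_{\alpha,\beta}}^{a}\,\abs{c_{\alpha,\beta}}^{b},
 \qquad a=\theta p_r,\quad b=(1-\theta)p_r.
\]
First apply H\"older in $\beta$ for fixed $\alpha$ with exponents chosen so that the first factor becomes $\bigl(\sum_\beta\abs{c}^2\bigr)^{a/2}$ and the second becomes $\bigl(\sum_\beta\abs{c}^{q}\bigr)^{\cdot}$ with $q=bp_v'$-type exponents. Then apply H\"older in $\alpha$, and finally Minkowski's integral inequality ($p_v/2\le1$ requires care; one uses the reverse direction $\ell^{p_v}_\beta\ell^2_\alpha\ge\ell^2_\alpha\ell^{p_v}_\beta$, valid since $p_v\le2$) to move the $\alpha$-sum inside.

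The exponent bookkeeping in this chain, using the identities $1/P=1/Q=1/p_r$ above, is the main obstacle. The cleanest route is probably to verify the three-function H\"older identity for the exponents and to use Minkowski precisely once, in the direction permitted by $p_v\le 2$.
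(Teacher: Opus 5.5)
Your mixed-norm interpolation paragraph is the paper's proof. You interpolate between $\ell_{p_u}(\alpha;\ell_2(\beta))$ and $\ell_2(\alpha;\ell_{p_v}(\beta))$ (both with $\alpha$ outermost, so no swapped-order interpolation is involved) with weight $v/r$ on the second space, which lands on $\ell_{p_r}$ by your identities $1/P=1/Q=1/p_r$; then Minkowski, valid since $p_v\le 2$, gives $\|c\|_{\ell_2(\alpha;\ell_{p_v}(\beta))}\le Y_{u,v}(c)$.

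Your opening display $1/p_r=(u/r)/p_u+(v/r)/p_v$ is false, as you noticed, and should be removed. Your elementary variant does close as planned: with $\theta=u/r$, apply H\"older in $\beta$ with exponents $2/(\theta p_r)$ and $p_v/((1-\theta)p_r)$ (the inner exponent is $p_v$ itself, not a ``$bp_v'$-type'' one), then in $\alpha$ with $p_u/(\theta p_r)$ and $2/((1-\theta)p_r)$; the conjugacy conditions are exactly $p_r/Q=1$ and $p_r/P=1$.
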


\begin{proof}
Interpolate finite mixed norms between
$\ell_{p_u}(\alpha;\ell_2(\beta))$ and
$\ell_2(\alpha;\ell_{p_v}(\beta))$ with parameter $v/r$.  The two
interpolated exponents are both $p_r$, because
\[
 \frac1{p_r}
 =\frac ur\frac1{p_u}+\frac vr\frac12
 =\frac ur\frac12+\frac vr\frac1{p_v}.
\]
Thus
\[
 \norm c_{\ell_{p_r}(\alpha;\ell_{p_r}(\beta))}
 \le X_{u,v}(c)^{u/r}
      \norm c_{\ell_2(\alpha;\ell_{p_v}(\beta))}^{v/r}.
\]
Since $p_v\le2$, Minkowski's inequality gives
\[
 \norm c_{\ell_2(\alpha;\ell_{p_v}(\beta))}
 \le\norm c_{\ell_{p_v}(\beta;\ell_2(\alpha))}
 =Y_{u,v}(c).
\]
The left-hand side is the ordinary $\ell_{p_r}$ norm of the full array.
\end{proof}

\section{The weighted row--column bootstrap}

Fix $B>0$ and integers $M,N\ge1$.  Define
\begin{equation}
\label{eq:WB}
 \mathcal W_B(f)=
 \left[\sum_{r=1}^M\frac{A_r(f)^2}{r^{2B}}\right]^{1/2}
\end{equation}
and
\begin{equation}
\label{eq:Gamma}
 \Gamma^{(K)}_{M,N}(B)
 =\sup\left\{
 \frac{\mathcal W_B(f)}{\norm f_\infty}:
 1\le n\le N,\quad 0\ne f:C_K^n\to\C,\quad
 \wh f(\alpha)=0\text{ if }s(\alpha)>M
 \right\}.
\end{equation}
For fixed $M,N$, this is finite by finite dimensionality.  The cutoff $N$ is
retained until the contraction has been absorbed.

\begin{remark}
The definition of $ \mathcal W_B(F)$ seems overcomplicated on the first glance. But in fact, its form has a simple explanation.
This grading in  homogeneity $r$ and keeping all $r\in [1, M]$ together is enevitable by the following reason.  
As we cannot claim that $Q_{\omega, d, e}$ obtained from bounded $F$ has a nice $L^\infty$  norm for fixed $d, e$ (see Addendum A for this claim), 
one is forced to consider the whole $Q_{\omega}$ as the source of nice norm, but $Q_\omega $
has of course many homogeneity modes; 
so one should invent a method to estimate all modes simultaneously—which explains the square function $W_B(F)$ in formula \eqref{eq:WB}
 as a vehicle for 
subsequent estimates. 
\end{remark}

\begin{remark}
We cannot exclude that by modifying the square function $\mathcal W_B$ one can improve further the estimate.
\end{remark}

Let
\begin{equation}
\label{eq:Q-bidecomp}
 Q(x,y)=\sum_{u,v\ge0}Q_{u,v}(x,y)
\end{equation}
be a function on $C_K^{n_x}\times C_K^{n_y}$, with $n_x,n_y\le N$, whose
Fourier support satisfies $u+v\le M$.  Here $Q_{u,v}$ has interaction order
$u$ in $x$ and $v$ in $y$:
\begin{equation}
\label{eq:Quv}
 Q_{u,v}(x,y)
 =\sum_{\substack{s(\alpha)=u\\s(\beta)=v}}
 c^{u,v}_{\alpha,\beta}\chi_\alpha(x)\chi_\beta(y).
\end{equation}
For this coefficient array, abbreviate $X_{u,v}=X_{u,v}(c^{u,v})$ and
$Y_{u,v}=Y_{u,v}(c^{u,v})$.  When $u\ge2$, define
\begin{equation}
\label{eq:Xtilde}
 \widetilde X_{u,v}=\rho_u^vX_{u,v},
\end{equation}
and, when $v\ge2$, define
\begin{equation}
\label{eq:Ytilde}
 \widetilde Y_{u,v}=\rho_v^uY_{u,v},
\end{equation}
with $\rho_u$ from \eqref{eq:rho-u}.

\begin{lemma}[Simultaneous Potts hypercontractivity row and column control]
\label{lem:row-column}
For every $B>0$,
\begin{align}
 \sum_{\substack{u\ge2\\v\ge0}}
 \frac{\widetilde X_{u,v}^2}{u^{2B}}
 &\le
 \Gamma^{(K)}_{M,N}(B)^2\norm Q_\infty^2,
 \label{eq:row-control}\\
 \sum_{\substack{u\ge0\\v\ge2}}
 \frac{\widetilde Y_{u,v}^2}{v^{2B}}
 &\le
 \Gamma^{(K)}_{M,N}(B)^2\norm Q_\infty^2.
 \label{eq:column-control}
\end{align}
\end{lemma}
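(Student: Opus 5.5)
The plan is to freeze one block of variables, apply the definition of $\Gamma^{(K)}_{M,N}(B)$ in the other block, and then remove the frozen variables with Potts hypercontractivity (Theorem \ref{thm:potts}) at the radius $\rho_u$ attached to the active layer $u$. I prove only the row estimate \eqref{eq:row-control}; the column estimate \eqref{eq:column-control} follows by exchanging the roles of $x$ and $y$.

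\emph{Step 1: freeze $y$.} For fixed $y\in C_K^{n_y}$, the function $x\mapsto Q(x,y)$ lives on $C_K^{n_x}$ with $n_x\le N$. Its interaction order is at most $M$, and $\norm{Q(\cdot,y)}_\infty\le\norm Q_\infty$. Its exact $x$-layer of order $u$ has Fourier coefficients
\[
 g_\alpha(y)=\sum_{v\ge0}\sum_{s(\beta)=v}c^{u,v}_{\alpha,\beta}\chi_\beta(y),
 \qquad s(\alpha)=u .
\]
By the definition \eqref{eq:Gamma}, for every $y$,
\[
 \sum_{u\ge1}u^{-2B}\Big(\sum_{s(\alpha)=u}\abs{g_\alpha(y)}^{p_u}\Big)^{2/p_u}\le\Gamma^{(K)}_{M,N}(B)^2\norm Q_\infty^2 .
\]
Averaging in $y$ and using Jensen's inequality (since $p_u\le2$, we have $(\E Z)^{2/p_u}\le\E Z^{2/p_u}$) gives
\[
 \sum_{u\ge1}u^{-2B}\Big(\E_y\sum_{s(\alpha)=u}\abs{g_\alpha(y)}^{p_u}\Big)^{2/p_u}\le\Gamma^{(K)}_{M,N}(B)^2\norm Q_\infty^2 .
\]

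\emph{Step 2: hypercontractivity in $y$.} Fix $u\ge2$, so that $p_u>1$, and set $\rho=\rho_u=(p_u-1)^{\gamma_K}$. By \eqref{eq:potts-multiplier} the operator $T_{\rho_u}^{\otimes n_y}$ acting in $y$ multiplies the $\beta$-coefficient of $g_\alpha$ by $\rho_u^{s(\beta)}=\rho_u^v$. By Parseval \eqref{eq:parseval} and Theorem \ref{thm:potts}, which is valid for complex functions,
\[
 \Big(\sum_{v}\rho_u^{2v}S_{\alpha,v}\Big)^{1/2}=\norm{T_{\rho_u}^{\otimes n_y}g_\alpha}_2\le\norm{g_\alpha}_{L^{p_u}(y)},
 \qquad S_{\alpha,v}=\sum_{s(\beta)=v}\abs{c^{u,v}_{\alpha,\beta}}^2 .
\]
Raising to the power $p_u$ and summing over $\alpha$ gives
\[
 \sum_\alpha\Big(\sum_v\rho_u^{2v}S_{\alpha,v}\Big)^{p_u/2}\le\E_y\sum_\alpha\abs{g_\alpha(y)}^{p_u}.
\]

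\emph{Step 3: separate the $v$-modes.} Set $q=p_u/2\le1$ and $a_v=(\rho_u^{2v}S_{\alpha,v})_\alpha\ge0$. For nonnegative sequences and $q\le1$, the reverse Minkowski inequality holds in $\ell^q$:
\[
 \sum_v\norm{a_v}_{\ell^q}\le\Big\|\sum_v a_v\Big\|_{\ell^q}.
\]
Since $\norm{a_v}_{\ell^q}=\rho_u^{2v}X_{u,v}^2=\widetilde X_{u,v}^2$, this yields
\[
 \sum_{v\ge0}\widetilde X_{u,v}^2\le\Big[\sum_\alpha\Big(\sum_v\rho_u^{2v}S_{\alpha,v}\Big)^{p_u/2}\Big]^{2/p_u}\le\Big(\E_y\sum_\alpha\abs{g_\alpha(y)}^{p_u}\Big)^{2/p_u}.
\]

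\emph{Step 4: conclude.} Multiply by $u^{-2B}$, sum over $u\ge2$, and compare with the averaged bound from Step 1. This gives \eqref{eq:row-control}.

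The restriction $u\ge2$ is needed in Step 2: at $u=1$ we have $p_1=1$ and $\rho_1=0$, so hypercontractivity gives nothing. The main point to check is that each step has the correct direction of inequality: Jensen is applied to the outer average, and reverse Minkowski (not ordinary Minkowski) is what allows the $\ell^{p_u/2}$ quasi-norm to be split across the $v$-modes.
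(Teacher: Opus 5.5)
Your proof is correct and follows essentially the same route as the paper. You freeze $y$, apply Potts hypercontractivity at radius $\rho_u$ to the coefficient functions $g_\alpha=B_{u,\alpha}$ via Parseval, and separate the $v$-modes. Your reverse Minkowski in $\ell^{p_u/2}$ is just the squared form of the paper's mixed-norm step $\ell^2_v(\ell^{p_u}_\alpha)\le\ell^{p_u}_\alpha(\ell^2_v)$, and your Jensen step is the paper's $\norm{g_u}_{p_u}\le\norm{g_u}_2$; you apply the definition of $\Gamma^{(K)}_{M,N}(B)$ pointwise in $y$ before averaging rather than after, which changes nothing.
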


\begin{proof}
By symmetry, prove \eqref{eq:row-control}.  Fix $u\ge2$.  We need this condition because for $p_u=\frac{2u}{u+1}$ we cannot have $p_u=1$, now hypercontarctivity $(2,1)$ is possible.
For
$s(\alpha)=u$, put
\begin{equation}
\label{eq:Bualpha}
 B_{u,\alpha}(y)
 =\sum_{v\ge0}\sum_{s(\beta)=v}
 c^{u,v}_{\alpha,\beta}\chi_\beta(y).
\end{equation}
Because $p_u\le2$, Minkowski's inequality, followed by Parseval in the
$y$ variables, gives
\begin{align*}
 \left(\sum_{v\ge0}\widetilde X_{u,v}^2\right)^{1/2}
 &\le
 \left[
  \sum_{s(\alpha)=u}
  \left(
   \sum_{v\ge0}\rho_u^{2v}
   \sum_{s(\beta)=v}\abs{c^{u,v}_{\alpha,\beta}}^2
  \right)^{p_u/2}
 \right]^{1/p_u}\\
 &=\left[
  \sum_{s(\alpha)=u}
  \norm{T_{\rho_u}^{\otimes n_y}B_{u,\alpha}}_2^{p_u}
 \right]^{1/p_u}.
\end{align*}
Theorem \ref{thm:potts} gives
\[
 \norm{T_{\rho_u}^{\otimes n_y}B_{u,\alpha}}_2
 \le\norm{B_{u,\alpha}}_{p_u}.
\]
Set
\[
 g_u(y)=
 \left[
  \sum_{s(\alpha)=u}
  \abs{B_{u,\alpha}(y)}^{p_u}
 \right]^{1/p_u}.
\]
Since the measure is normalized and $p_u\le2$,
\begin{equation}
\label{eq:fixed-u}
 \left(\sum_{v\ge0}\widetilde X_{u,v}^2\right)^{1/2}
 \le\norm{g_u}_{p_u}\le\norm{g_u}_2.
\end{equation}
Multiplying by $u^{-B}$, squaring, summing in $u$, and applying Tonelli yields
\begin{align*}
 \sum_{\substack{u\ge2\\v\ge0}}
 \frac{\widetilde X_{u,v}^2}{u^{2B}}
 &\le
 \E_y\sum_{u=2}^M\frac{g_u(y)^2}{u^{2B}}.
\end{align*}
For fixed $y$, $g_u(y)$ is precisely the coefficient $\ell_{p_u}$ norm of
the interaction-$u$ layer of $x\mapsto Q(x,y)$.  Therefore the definition
of $\Gamma^{(K)}_{M,N}(B)$ bounds the last display by
\[
 \E_y\left[
  \Gamma^{(K)}_{M,N}(B)^2
  \norm{Q(\,\cdot\,,y)}_\infty^2
 \right]
 \le\Gamma^{(K)}_{M,N}(B)^2\norm Q_\infty^2.
\]
The column estimate follows by interchanging $x$ and $y$.
\end{proof}

\section{Central entropy contraction}

For $r\ge4$, define the central bidegree window
\begin{equation}
\label{eq:Cr}
 \mathcal C_r=
 \left\{(u,v)\in\N^2:
 u+v=r,\quad u,v\ge2,\quad
 \frac r4\le u,v\le\frac{3r}{4}
 \right\}.
\end{equation}
The explicit conditions $u,v\ge2$ are essential at the bottom of the
bootstrap.  Put
\begin{equation}
\label{eq:kappa4}
 \kappa_4=\sup_{r\ge4}(r+1)^{1/(2r)}=5^{1/8}.
\end{equation}

\begin{lemma}[Uniform cost of removing the Potts weights]
\label{lem:potts-cost}
Let $u,v\ge2$, $r=u+v$, and $\theta=u/r$.  Then
\begin{equation}
\label{eq:potts-cost}
 X_{u,v}^{\theta}Y_{u,v}^{1-\theta}
 \le3^{2\gamma_K}
 \widetilde X_{u,v}^{\theta}
 \widetilde Y_{u,v}^{1-\theta}.
\end{equation}
\end{lemma}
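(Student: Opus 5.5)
Since $\widetilde X_{u,v}=\rho_u^vX_{u,v}$ and $\widetilde Y_{u,v}=\rho_v^uY_{u,v}$, the claimed inequality is equivalent to the purely numerical bound
\[
 \rho_u^{-v\theta}\rho_v^{-u(1-\theta)}\le 3^{2\gamma_K},
\]
since $X^\theta Y^{1-\theta}=\rho_u^{-v\theta}\rho_v^{-u(1-\theta)}\widetilde X^\theta\widetilde Y^{1-\theta}$. (If $X_{u,v}=0$ or $Y_{u,v}=0$ both sides vanish, so there is nothing to prove.) With $\theta=u/r$ we have $v\theta=uv/r=u(1-\theta)$. Using \eqref{eq:rho-u}, the left side becomes
\[
 \left[\frac{u+1}{u-1}\cdot\frac{v+1}{v-1}\right]^{\gamma_K uv/r}.
\]
It therefore suffices to show
\[
 \frac{uv}{u+v}\Bigl[\log\tfrac{u+1}{u-1}+\log\tfrac{v+1}{v-1}\Bigr]\le 2\log 3
 \qquad\text{for all } u,v\ge2,
\]
because $\gamma_K>0$ and the bound is then uniform in $K$.

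To prove this, I will set $\phi(u)=u\log\frac{u+1}{u-1}$. The left side splits as
\[
 \frac{v}{u+v}\,\phi(u)+\frac{u}{u+v}\,\phi(v),
\]
which is a convex combination of $\phi(u)$ and $\phi(v)$. So it is enough to show $\phi(u)\le 2\log 3$ for integers (indeed reals) $u\ge2$. At $u=2$ we get exactly $\phi(2)=2\log3$.

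The one step needing care is showing that $\phi$ is nonincreasing on $[2,\infty)$. Expanding $\log\frac{1+1/u}{1-1/u}=2\sum_{k\ge0}\frac{u^{-(2k+1)}}{2k+1}$ gives
\[
 \phi(u)=2\sum_{k\ge0}\frac{u^{-2k}}{2k+1},
\]
and every term is decreasing in $u$. This also shows $\phi(u)\downarrow 2$. Hence $\phi(u)\le\phi(2)=2\log3$, which gives \eqref{eq:potts-cost}, with equality attained at $u=v=2$.
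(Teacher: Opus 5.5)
Your proof is correct and follows essentially the same route as the paper. Both reduce the claim to bounding the factor $\rho_u^{-v\theta}\rho_v^{-u(1-\theta)}$, write its logarithm as $\gamma_K\bigl[\frac{v}{r}\phi(u)+\frac{u}{r}\phi(v)\bigr]$ with $\phi(t)=t\log\frac{t+1}{t-1}$, and use that $\phi$ is nonincreasing, so $\phi\le\phi(2)=2\log 3$. The one small difference is that you read monotonicity directly off the expansion $\phi(u)=2\sum_{k\ge0}u^{-2k}/(2k+1)$, whereas the paper shows $\phi'\le 0$ by comparing the series of its two terms; both arguments are valid.
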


\begin{proof}
By definition, the ratio between the two sides without the constant is
\[
 R_K(u,v)
 =\rho_u^{-v\theta}\rho_v^{-u(1-\theta)}.
\]
Using \eqref{eq:rho-u},
\begin{equation}
\label{eq:log-R}
 \log R_K(u,v)
 =\gamma_K\frac{uv}{r}
 \left[
  \log\frac{u+1}{u-1}
  +\log\frac{v+1}{v-1}
 \right].
\end{equation}
For $t\ge2$, define
\[
 \phi(t)=t\log\frac{t+1}{t-1}.
\]
The function $\phi$ is decreasing.  Indeed, with $z=1/t\in(0,1/2]$,
\begin{align*}
 \log\frac{t+1}{t-1}
 &=2\sum_{j\ge0}\frac{z^{2j+1}}{2j+1},\\
 \frac{2t}{t^2-1}
 &=2\sum_{j\ge0}z^{2j+1},
\end{align*}
so
$\phi'(t)=\log((t+1)/(t-1))-2t/(t^2-1)\le0$.
Therefore $\phi(t)\le\phi(2)=2\log3$.  Rewriting
\eqref{eq:log-R} as
\[
 \log R_K(u,v)
 =\frac{\gamma_K}{r}\bigl[v\phi(u)+u\phi(v)\bigr]
\]
gives
\[
 \log R_K(u,v)\le2\gamma_K\log3.
\]
Exponentiation proves the claim.
\end{proof}

\begin{proposition}[Central Potts entropy contraction]
\label{prop:central}
For every $B>0$, every $M\ge4$, and every $Q$ as in
\eqref{eq:Q-bidecomp},
\begin{align}
&\left[
 \sum_{r=4}^M\frac1{r^{2B}}
 \left(
  \sum_{(u,v)\in\mathcal C_r}
  \norm{c(Q_{u,v})}_{p_r}^{p_r}
 \right)^{2/p_r}
\right]^{1/2}
\notag\\
&\qquad\le
3^{2\gamma_K}\kappa_4\sqrt2\,
\exp\left\{-\left(B+\frac12\right)
 h\left(\frac14\right)\right\}
\Gamma^{(K)}_{M,N}(B)\norm Q_\infty.
\label{eq:central}
\end{align}
Here $c(Q_{u,v})=(c^{u,v}_{\alpha,\beta})$.
\end{proposition}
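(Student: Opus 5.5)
The plan is to bound each central bidegree separately by Lemma~\ref{lem:two-block}, pass from $\ell_{p_r}$ to $\ell_2$ over the window at the cost of $\kappa_4$, and then sum over all $r$ at once with Lemma~\ref{lem:mixed-angle}. The normalizing masses for that lemma will come from the row and column control of Lemma~\ref{lem:row-column}.

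\textbf{Step 1: single bidegree.} Fix $r\ge4$ and $(u,v)\in\mathcal C_r$, and put $\theta=u/r\in[1/4,3/4]$. Lemma~\ref{lem:two-block} gives $\norm{c(Q_{u,v})}_{p_r}\le X_{u,v}^{\theta}Y_{u,v}^{1-\theta}$. Since $u,v\ge2$, Lemma~\ref{lem:potts-cost} then gives
\[
 \norm{c(Q_{u,v})}_{p_r}\le 3^{2\gamma_K}a_{u,v},
 \qquad
 a_{u,v}=\widetilde X_{u,v}^{\theta}\widetilde Y_{u,v}^{1-\theta}.
\]

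\textbf{Step 2: from $\ell_{p_r}$ to $\ell_2$ over the window.} The window $\mathcal C_r$ has at most $r+1$ elements. H\"older's inequality with $1/p_r-1/2=1/(2r)$ gives
\[
 \Bigl(\sum_{\mathcal C_r}a_{u,v}^{p_r}\Bigr)^{2/p_r}
 \le (r+1)^{1/r}\sum_{\mathcal C_r}a_{u,v}^2
 \le\kappa_4^2\sum_{\mathcal C_r}a_{u,v}^2.
\]
The uniform bound holds because $(r+1)^{1/(2r)}$ is decreasing in $r$, so its maximum over $r\ge4$ is $5^{1/8}$. It therefore suffices to bound $\sum_{r=4}^M r^{-2B}\sum_{\mathcal C_r}a_{u,v}^2$.

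\textbf{Step 3: normalize and extract entropy.} Set $S=\Gamma^{(K)}_{M,N}(B)^2\norm Q_\infty^2$, and index by the pairs $i=(u,v)$ with $(u,v)\in\mathcal C_r$ for some $4\le r\le M$. Define
\[
 u_i=\frac{\widetilde X_{u,v}^2}{u^{2B}S},
 \qquad
 v_i=\frac{\widetilde Y_{u,v}^2}{v^{2B}S}.
\]
All these pairs have $u,v\ge2$, so Lemma~\ref{lem:row-column} gives $\sum_i u_i\le1$ and $\sum_i v_i\le1$. With this normalization,
\[
 \frac{a_{u,v}^2}{r^{2B}}
 =S\,\frac{u^{2B\theta}v^{2B(1-\theta)}}{r^{2B}}\,u_i^{\theta}v_i^{1-\theta}
 =S\,e^{-2Bh(\theta)}\,u_i^{\theta}v_i^{1-\theta}.
\]
The second equality uses $u=\theta r$ and $v=(1-\theta)r$. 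On the window $h(\theta)\ge h(1/4)$, because $h$ is symmetric about $1/2$ and concave. Hence each factor $e^{-2Bh(\theta)}$ is at most $e^{-2Bh(1/4)}$.

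\textbf{Step 4: mixed-angle summation and assembly.} Lemma~\ref{lem:mixed-angle} applies with $a=1/4$, since every angle $\theta_i$ lies in $[1/4,3/4]$. Summing over all indices simultaneously, across every $r$, gives $\sum_i u_i^{\theta_i}v_i^{1-\theta_i}\le 2e^{-h(1/4)}$. Combining Steps 1–4, the square of the left-hand side of \eqref{eq:central} is at most
\[
 3^{4\gamma_K}\kappa_4^2\,S\,e^{-2Bh(1/4)}\cdot2e^{-h(1/4)}.
\]
Taking square roots yields exactly \eqref{eq:central}.

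\textbf{Main obstacle.} The delicate step is Step 3. The row and column masses from Lemma~\ref{lem:row-column} must be normalized by $u^{2B}$ and $v^{2B}$, not by $r^{2B}$. Only with this choice does the weight ratio collapse to exactly $e^{-2Bh(\theta)}$, while the normalized families $(u_i)$ and $(v_i)$ remain summable over all $r$ together. The conditions $u,v\ge2$ in $\mathcal C_r$ are what make both the row–column control and Lemma~\ref{lem:potts-cost} available.
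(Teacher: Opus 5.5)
Your proof is correct and follows the paper's argument essentially step for step: two-block interpolation plus the uniform Potts cost $3^{2\gamma_K}$ for each central bidegree; normalization of the row and column masses by $u^{2B}$ and $v^{2B}$, so that the weight ratio collapses to $e^{-2Bh(\theta)}$; a single application of Lemma~\ref{lem:mixed-angle} with $a=1/4$ across all $r$ at once; and the $\kappa_4$ loss on the window, which you take before rather than after the summation, an immaterial reordering. The only omission is the degenerate case $S=0$ before you divide by $S$, which is trivial because $\Gamma^{(K)}_{M,N}(B)\ge1$, so $S=0$ forces $Q=0$.
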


\begin{proof}
Fix $(u,v)\in\mathcal C_r$ and write $\theta=u/r$.  Set
\[
 U_{u,v}=\frac{\widetilde X_{u,v}}{u^B},
 \qquad
 V_{u,v}=\frac{\widetilde Y_{u,v}}{v^B}.
\]
Lemmas \ref{lem:two-block} and \ref{lem:potts-cost} give
\begin{align*}
 \frac{\norm{c(Q_{u,v})}_{p_r}}{r^B}
 &\le3^{2\gamma_K}
 \frac{u^{B\theta}v^{B(1-\theta)}}{r^B}
 U_{u,v}^{\theta}V_{u,v}^{1-\theta}\\
 &=3^{2\gamma_K}\exp\{-Bh(\theta)\}
 U_{u,v}^{\theta}V_{u,v}^{1-\theta}\\
 &\le3^{2\gamma_K}
 \exp\left\{-Bh\left(\frac14\right)\right\}
 U_{u,v}^{\theta}V_{u,v}^{1-\theta}.
\end{align*}
The final step uses $\theta\in[1/4,3/4]$ and the symmetry and concavity of
binary entropy.

By Lemma \ref{lem:row-column}, both square sums
$\sum U_{u,v}^2$ and $\sum V_{u,v}^2$, over all central pairs and all $r$,
are at most
\[
 G^2=\Gamma^{(K)}_{M,N}(B)^2\norm Q_\infty^2.
\]
If $G=0$, there is nothing to prove.  Otherwise apply Lemma
\ref{lem:mixed-angle} with $a=1/4$ to
\[
 \frac{U_{u,v}^2}{G^2},
 \qquad
 \frac{V_{u,v}^2}{G^2},
 \qquad
 \theta_{u,v}=\frac ur.
\]
It follows that
\begin{align}
&\sum_{r=4}^M\sum_{(u,v)\in\mathcal C_r}
 \left(\frac{\norm{c(Q_{u,v})}_{p_r}}{r^B}\right)^2
\notag\\
&\qquad\le
2\,3^{4\gamma_K}
\exp\left\{-(2B+1)h\left(\frac14\right)\right\}G^2.
\label{eq:central-l2}
\end{align}
Finally,
$1/p_r-1/2=1/(2r)$ and $\#\mathcal C_r\le r+1$, so
\begin{align*}
 \left(
  \sum_{(u,v)\in\mathcal C_r}
  \norm{c(Q_{u,v})}_{p_r}^{p_r}
 \right)^{1/p_r}
 &\le(r+1)^{1/(2r)}
 \left(
  \sum_{(u,v)\in\mathcal C_r}
  \norm{c(Q_{u,v})}_{p_r}^{2}
 \right)^{1/2}\\
 &\le\kappa_4
 \left(
  \sum_{(u,v)\in\mathcal C_r}
  \norm{c(Q_{u,v})}_{p_r}^{2}
 \right)^{1/2}.
\end{align*}
Square, sum in $r$, use \eqref{eq:central-l2}, and take square roots.
\end{proof}

\section{Random coordinate splitting}

Let $f:C_K^n\to\C$ have interaction order at most $M$, with $n\le N$.
Color each coordinate independently $x$ or $y$, with probability $1/2$ for
each color.  For a coloring $\omega$, define
\begin{equation}
\label{eq:Qomega}
 Q_\omega(x,y)=f(z_1,\ldots,z_n),
 \qquad
 z_j=
 \begin{cases}
 x_j,&j\text{ is colored }x,\\
 y_j,&j\text{ is colored }y.
 \end{cases}
\end{equation}
Then
\begin{equation}
\label{eq:norm-equality}
 \norm{Q_\omega}_\infty=\norm f_\infty.
\end{equation}
Indeed, the relevant coordinate in each pair $(x_j,y_j)$ may be chosen
arbitrarily, so every point of $C_K^n$ is represented.

For $r\ge4$, define the central coefficient mass
\begin{equation}
\label{eq:Cr-omega}
 C_r(\omega)=
 \left(
  \sum_{(u,v)\in\mathcal C_r}
  \norm{c(Q_{\omega,u,v})}_{p_r}^{p_r}
 \right)^{1/p_r}.
\end{equation}

\begin{lemma}[Exact ancestry and coefficient capture]
\label{lem:capture}
For every $r\ge4$,
\begin{equation}
\label{eq:capture-exact}
 \E_\omega C_r(\omega)^{p_r}
 =\pi_r A_r(f)^{p_r},
\end{equation}
where
\begin{equation}
\label{eq:pi-r}
 \pi_r=
 \Pr\{(Z_r,r-Z_r)\in\mathcal C_r\},
 \qquad Z_r\sim\Bin(r,1/2).
\end{equation}
Moreover,
\begin{equation}
\label{eq:pi-lower}
 \pi_r\ge\frac13\qquad(r\ge4).
\end{equation}
Consequently,
\begin{equation}
\label{eq:Ar-capture}
 A_r(f)
 \le3^{1/p_r}
 \left(\E_\omega C_r(\omega)^2\right)^{1/2}
 \le3^{5/8}
 \left(\E_\omega C_r(\omega)^2\right)^{1/2}.
\end{equation}
\end{lemma}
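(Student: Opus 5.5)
We compute the Fourier expansion of $Q_\omega$ directly, then verify three things: an exact ancestry identity, a binomial lower bound, and a short chain of elementary inequalities.

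\textbf{Step 1: the expansion of $Q_\omega$.} For a coloring $\omega$, let $S_x,S_y$ be the sets of coordinates colored $x$ and $y$. Substituting into $f=\sum_\gamma \wh f(\gamma)\chi_\gamma$ gives
\[
 Q_\omega(x,y)=\sum_\gamma \wh f(\gamma)\chi_{\gamma|_{S_x}}(x)\chi_{\gamma|_{S_y}}(y).
\]
Here $\alpha=\gamma|_{S_x}$ and $\beta=\gamma|_{S_y}$ are read as frequency vectors on the respective variable sets, and the map $\gamma\mapsto(\alpha,\beta)$ is a bijection. No local degree is split, and since the supports are disjoint, $\gamma$ is recovered from $(\alpha,\beta)$. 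Hence the coefficient $c^{u,v}_{\alpha,\beta}$ equals $\wh f(\gamma)$, with $u=\#(\supp\gamma\cap S_x)$ and $v=\#(\supp\gamma\cap S_y)$. In particular $u+v=s(\gamma)$, so the pairs with $u+v=r$ collect exactly the layer-$r$ coefficients.

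\textbf{Step 2: the exact identity.} By Step 1,
\[
 C_r(\omega)^{p_r}=\sum_{s(\gamma)=r}\abs{\wh f(\gamma)}^{p_r}\,\mathbf 1\{(\#(\supp\gamma\cap S_x),\,r-\#(\supp\gamma\cap S_x))\in\mathcal C_r\}.
\]
For each fixed $\gamma$ with $s(\gamma)=r$, the count $\#(\supp\gamma\cap S_x)$ is $\Bin(r,1/2)$. Taking expectations termwise gives \eqref{eq:capture-exact}.

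\textbf{Step 3: the bound $\pi_r\ge 1/3$.} This is the main obstacle. We need $\Pr\{\max(2,r/4)\le Z_r\le \min(r-2,3r/4)\}\ge1/3$ for all $r\ge4$. For small $r$ we check directly:
\begin{itemize}
\item $r=4$: the window is $\{2\}$, so $\pi_4=6/16=3/8$.
\item $r=5$: the window is $\{2,3\}$, so $\pi_5=20/32$.
\item $r=6$: the window is $\{2,3,4\}$, giving $50/64$.
\item $r=7$: the window is $\{2,\dots,5\}$, giving $112/128$.
\end{itemize}
For $r\ge8$ we have $r/4\ge2$, so the condition $u,v\ge2$ is automatic. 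The complement event is $\abs{Z_r-r/2}>r/4$. Chebyshev bounds its probability by $(r/4)/(r/4)^2=4/r\le1/2$, which settles $r\ge12$. The cases $8\le r\le11$ we check by direct binomial sums, e.g. $\pi_8=\Pr(2\le Z_8\le6)=238/256$. Hoeffding, $2e^{-r/8}$, could be used instead but is weaker at small $r$, so the finite check is needed in any case.

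\textbf{Step 4: the consequence \eqref{eq:Ar-capture}.} From Steps 2 and 3,
\[
 A_r(f)^{p_r}\le 3\,\E_\omega C_r^{p_r}\le 3\left(\E_\omega C_r^2\right)^{p_r/2},
\]
where the second inequality is Jensen (Lyapunov), valid since $p_r\le2$ and $\Pr$ is a probability measure. Taking $1/p_r$ powers gives $A_r(f)\le3^{1/p_r}\left(\E_\omega C_r^2\right)^{1/2}$. Finally $1/p_r=(r+1)/(2r)\le5/8$ for $r\ge4$ and $3>1$, so $3^{1/p_r}\le3^{5/8}$.
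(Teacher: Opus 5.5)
Your proof is correct and follows the paper's argument: injective restriction of each character to the two colour classes, the $\Bin(r,1/2)$ law of the $x$-interaction order, a lower bound on $\pi_r$, and Lyapunov's inequality $\norm{C_r}_{p_r}\le\norm{C_r}_2$. The only difference is in Step 3, where you undersell Chebyshev: for $r\ge5$ the constraint $u\ge r/4>1$ already forces $u\ge2$ (and likewise $v\ge2$), so $\pi_r\ge1-4/r\ge1/3$ holds for every $r\ge6$. This is why the paper checks only $r=4,5$ by hand, and your extra finite checks for $6\le r\le11$ are unnecessary; they are correct, but you display only $r=8$ among $8\le r\le 11$.
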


\begin{proof}
Let $I_x$ and $I_y$ be the color classes.  A character $\chi_\alpha$ becomes
\begin{equation}
\label{eq:split-character}
 \chi_{\alpha|_{I_x}}(x)\chi_{\alpha|_{I_y}}(y).
\end{equation}
The restrictions retain both the coordinate and its full local degree in
$\Z/K\Z$.  Their supports are disjoint and their union reconstructs $\alpha$.
Thus the parent-to-child map is injective and no coefficients merge.

If $s(\alpha)=r$, its $x$-interaction order is the number of active
coordinates colored $x$, hence is distributed as $Z_r\sim\Bin(r,1/2)$.
Summing coefficient indicators proves \eqref{eq:capture-exact}.

For $r=4$,
\[
 \pi_4=\Pr\{Z_4=2\}=\frac38,
\]
and for $r=5$,
\[
 \pi_5=\Pr\{Z_5\in\{2,3\}\}=\frac58.
\]
For $r\ge6$, Chebyshev's inequality gives
\[
 \pi_r
 \ge1-\Pr\left\{\abs{Z_r-r/2}\ge\frac r4\right\}
 \ge1-\frac4r
 \ge\frac13.
\]
This proves \eqref{eq:pi-lower}.  Since $p_r\le2$ and the coloring space has
probability one,
\[
 \pi_r^{1/p_r}A_r(f)
 =\norm{C_r}_{p_r}
 \le\norm{C_r}_2.
\]
Finally, $1/p_r=1/2+1/(2r)\le5/8$ for $r\ge4$.
\end{proof}

\section{Uniform contraction and bootstrapping}
\label{sec:absorption}

Combining the preceding two sections gives a high-level contraction for any
weight exponent $B$.

\begin{proposition}[High interaction levels]
\label{prop:high-levels}
For every $B>0$ and every $f$ of interaction order at most $M$,
\begin{equation}
\label{eq:high-levels}
 \left[
  \sum_{r=4}^M\frac{A_r(f)^2}{r^{2B}}
 \right]^{1/2}
 \le c_{K,B}\Gamma^{(K)}_{M,N}(B)\norm f_\infty,
\end{equation}
where
\begin{equation}
\label{eq:cKB}
 c_{K,B}
 =3^{2\gamma_K+5/8}5^{1/8}\sqrt2\,
 \exp\left\{-\left(B+\frac12\right)
 h\left(\frac14\right)\right\}.
\end{equation}
\end{proposition}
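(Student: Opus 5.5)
We combine Lemma \ref{lem:capture} with Proposition \ref{prop:central}, applied to the random bicolored functions $Q_\omega$. Fix a coloring $\omega$ and apply Proposition \ref{prop:central} to $Q=Q_\omega$. This is legitimate because $Q_\omega$ lives on $C_K^{n_x}\times C_K^{n_y}$ with $n_x,n_y\le n\le N$, and its Fourier support satisfies $u+v\le M$. The injectivity of the parent-to-child map ensures that each parent index $\alpha$ with $s(\alpha)=r$ appears exactly once, in the block $(u,v)$ with $u+v=r$. In the notation \eqref{eq:Cr-omega}, Proposition \ref{prop:central} together with \eqref{eq:norm-equality} reads
\[
 \sum_{r=4}^M\frac{C_r(\omega)^2}{r^{2B}}
 \le \Bigl(3^{2\gamma_K}5^{1/8}\sqrt2\,
 e^{-(B+\frac12)h(1/4)}\Bigr)^2
 \Gamma^{(K)}_{M,N}(B)^2\norm f_\infty^2 .
\]
The key point is that the right-hand side does not depend on $\omega$. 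If $M<4$, the left side of \eqref{eq:high-levels} is an empty sum and there is nothing to prove, so assume $M\ge4$.

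Next, average over $\omega$. Lemma \ref{lem:capture}, in the form \eqref{eq:Ar-capture}, gives $A_r(f)^2\le 3^{5/4}\,\E_\omega C_r(\omega)^2$ for every $r\ge4$. Divide by $r^{2B}$, sum over $4\le r\le M$, and interchange the finite sum with the expectation (the coloring space is finite). This yields
\[
 \sum_{r=4}^M\frac{A_r(f)^2}{r^{2B}}
 \le 3^{5/4}\,\E_\omega\sum_{r=4}^M\frac{C_r(\omega)^2}{r^{2B}} .
\]
We then bound the inner sum pointwise by the uniform estimate of the first paragraph. Taking square roots produces the factor $3^{5/8}\cdot3^{2\gamma_K}5^{1/8}\sqrt2\,e^{-(B+1/2)h(1/4)}$, which is exactly $c_{K,B}$ as defined in \eqref{eq:cKB}.

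The only delicate step is checking that the proposition applies with the same $\Gamma^{(K)}_{M,N}(B)$. The supremum in \eqref{eq:Gamma} runs over all dimensions $n\le N$, and the row and column controls of Lemma \ref{lem:row-column} restrict $Q(\cdot,y)$ and $Q(x,\cdot)$ to functions on $C_K^{n_x}$ and $C_K^{n_y}$. Both have interaction order at most $M$ and ambient dimension at most $N$, so they fall within the class defining $\Gamma$. The rest is bookkeeping of constants.
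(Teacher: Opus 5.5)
Your proposal is correct and is essentially the paper's own proof. You apply Proposition~\ref{prop:central} to each $Q_\omega$ with $\norm{Q_\omega}_\infty=\norm f_\infty$, average using $A_r(f)^2\le 3^{5/4}\E_\omega C_r(\omega)^2$ from Lemma~\ref{lem:capture}, interchange the finite sum with the expectation, and collect the factor $3^{5/8}\cdot3^{2\gamma_K}5^{1/8}\sqrt2\,\e^{-(B+1/2)h(1/4)}=c_{K,B}$, exactly as the paper does; your remarks on the empty case $M<4$ and on the restrictions of $Q_\omega$ lying in the class defining $\Gamma^{(K)}_{M,N}(B)$ are bookkeeping the paper leaves implicit.
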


\begin{proof}
By Lemma \ref{lem:capture}, Tonelli's theorem, Proposition
\ref{prop:central}, and \eqref{eq:norm-equality},
\begin{align*}
 \sum_{r=4}^M\frac{A_r(f)^2}{r^{2B}}
 &\le3^{5/4}\E_\omega
 \sum_{r=4}^M\frac{C_r(\omega)^2}{r^{2B}}\\
 &\le3^{5/4}
 \left[
  3^{2\gamma_K}5^{1/8}\sqrt2\,
  \exp\left\{-\left(B+\frac12\right)
  h\left(\frac14\right)\right\}
 \right]^2\\
 &\qquad\qquad\times
 \Gamma^{(K)}_{M,N}(B)^2\norm f_\infty^2.
\end{align*}
Taking square roots gives \eqref{eq:high-levels}.
\end{proof}

Choose
\begin{equation}
\label{eq:BK}
 B_K=4\gamma_K+2.
\end{equation}
Let
\begin{equation}
\label{eq:h0}
 h_0=h\left(\frac14\right)
 =2\log2-\frac34\log3.
\end{equation}

\begin{lemma}[Uniform numerical contraction]
\label{lem:numerical}
For every $K\ge2$,
\begin{equation}
\label{eq:c-uniform}
 c_{K,B_K}\le c_*,
\end{equation}
where
\begin{equation}
\label{eq:cstar}
 c_*
 =3^{13/8}5^{1/8}\sqrt2\,\e^{-(9/2)h_0}
 =0.8207720540\ldots<1.
\end{equation}
\end{lemma}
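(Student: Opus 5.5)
The plan is to write $c_{K,B_K}$ as a fixed numerical factor times a $\gamma_K$-dependent factor, show the latter is decreasing in $\gamma_K$, and then show $\gamma_K\ge 1/2$ for every $K\ge2$.

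\textbf{Step 1: factorization.} Substitute $B_K=4\gamma_K+2$ into \eqref{eq:cKB}. Then $B_K+\tfrac12=4\gamma_K+\tfrac52$, so
\[
 c_{K,B_K}
 =3^{5/8}5^{1/8}\sqrt2\,\e^{-(5/2)h_0}\cdot\bigl(9\,\e^{-4h_0}\bigr)^{\gamma_K}.
\]
All dependence on $K$ now sits in the base $q:=9\e^{-4h_0}$ raised to the power $\gamma_K$.

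\textbf{Step 2: the base is below one.} From \eqref{eq:h0}, $\e^{h_0}=4\cdot3^{-3/4}$, so $\e^{4h_0}=256/27$. Hence
\[
 q=\frac{9\cdot27}{256}=\frac{243}{256}<1 .
\]
This is the crux of the lemma: the weight exponent $B_K=4\gamma_K+2$ was chosen so that the entropy gain $\e^{-4\gamma_Kh_0}$ beats the Potts loss $3^{2\gamma_K}$ from Lemma \ref{lem:potts-cost}, and it does so only by the narrow margin $243/256$. Since $q<1$, the map $\gamma\mapsto q^\gamma$ is decreasing, so it suffices to prove $\gamma_K\ge\tfrac12$ for all $K$.

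\textbf{Step 3: the lower bound $\gamma_K\ge\tfrac12$.} For $K=2$ this is an equality by definition. For $K\ge3$, put $m=K-1\ge2$. The condition $\gamma_K\ge\tfrac12$ becomes
\[
 \log m\ge\frac{2(m-1)}{m+1}.
\]
This is the classical inequality $\log m\ge 2(m-1)/(m+1)$ for $m\ge1$. To prove it, note that both sides vanish at $m=1$. The derivative of the left side is $1/m$, and that of the right side is $4/(m+1)^2$. We have $1/m\ge 4/(m+1)^2$ because this is equivalent to $(m-1)^2\ge0$.

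\textbf{Step 4: conclusion and numerics.} The maximum over $K$ is therefore attained at $\gamma_K=\tfrac12$. This gives
\[
 c_{K,B_K}\le 3^{5/8}5^{1/8}\sqrt2\,\e^{-(5/2)h_0}\cdot 3\,\e^{-2h_0}
 =3^{13/8}5^{1/8}\sqrt2\,\e^{-(9/2)h_0}=c_* .
\]
It remains to check numerically that $c_*<1$. With $h_0=2\log2-\tfrac34\log3\approx0.562335$, one has $\e^{-4.5h_0}\approx0.07962$. Also $3^{13/8}\approx5.5963$, $5^{1/8}\approx1.2228$ and $\sqrt2\approx1.41421$. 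The product is approximately $0.8208<1$.

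The one step needing care is the sign of $\log q$, i.e.\ that $243/256<1$, together with the monotonicity in $\gamma$ it yields; after that the argument is routine.
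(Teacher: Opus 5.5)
Your proof is correct and is essentially the paper's: the paper writes $c_{K,B_K}/c_*=(243/256)^{\gamma_K-1/2}$, which is your $q^{\gamma_K}$ factorization normalized at $\gamma=\frac12$, and it proves $\gamma_K\ge\frac12$ by the same derivative comparison $1/t-4/(t+1)^2=(t-1)^2/(t(t+1)^2)\ge0$. One numerical slip: $3^{13/8}\approx5.961$, not $5.5963$ (with your listed factors the product would come out near $0.77$), so your final value $0.8208$ is right but the displayed arithmetic does not produce it; the paper avoids this by checking the exact logarithm $\log c_*=5\log3+\frac18\log5-\frac{17}{2}\log2\approx-0.1975<0$.
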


\begin{proof}
First, $\gamma_K\ge1/2$.  This is immediate for $K=2$.  For $K\ge3$, put
$t=K-1\ge2$.  The inequality is equivalent to
\[
 \log t\ge\frac{2(t-1)}{t+1}.
\]
The difference vanishes at $t=1$ and has derivative
\[
 \frac1t-\frac4{(t+1)^2}
 =\frac{(t-1)^2}{t(t+1)^2}\ge0.
\]
Using \eqref{eq:cKB} and \eqref{eq:BK},
\begin{align*}
 \frac{c_{K,B_K}}{c_*}
 &=3^{2(\gamma_K-1/2)}
   \exp\{-4(\gamma_K-1/2)h_0\}\\
 &=\left(\frac{243}{256}\right)^{\gamma_K-1/2}
 \le1,
\end{align*}
because
$2\log3-4h_0=5\log3-8\log2=\log(243/256)<0$.
Finally,
\begin{equation}
\label{eq:log-cstar}
 \log c_*
 =5\log3+\frac18\log5-\frac{17}{2}\log2
 =-0.1975098523\ldots<0.
\end{equation}
This proves $c_*<1$ and the stated numerical value.
\end{proof}

\begin{theorem}[Uniform weighted estimate]
\label{thm:weighted}
For every fixed $K\ge2$, there is a constant $D_K<\infty$ such that, for
all $M,N\ge1$,
\begin{equation}
\label{eq:Gamma-bound}
 \Gamma^{(K)}_{M,N}(B_K)\le D_KM^3.
\end{equation}
Consequently, in every dimension,
\begin{equation}
\label{eq:weighted-final}
 \left[
  \sum_{r=1}^M
  \frac{A_r(f)^2}{r^{2B_K}}
 \right]^{1/2}
 \le D_KM^3\norm f_\infty.
\end{equation}
\end{theorem}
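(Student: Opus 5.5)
The plan is a bootstrap with an ambient cutoff. Fix $M,N\ge1$ and $B=B_K$. By finite dimensionality, $\Gamma:=\Gamma^{(K)}_{M,N}(B_K)<\infty$. This finiteness is what allows absorption below without any circularity.

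Take $1\le n\le N$ and $f:C_K^n\to\C$ of interaction order at most $M$, and split $\mathcal W_{B_K}(f)^2$ into low and high levels.

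\textbf{Low levels.} For $r\le\min\{3,M\}$, Corollary \ref{cor:low} with $B=B_K$ bounds the low part by $C_{K,B_K}M^3\norm f_\infty$.

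\textbf{High levels.} When $M\ge4$, the levels $4\le r\le M$ are bounded by Proposition \ref{prop:high-levels} by $c_{K,B_K}\Gamma\norm f_\infty$. Lemma \ref{lem:numerical} then gives $c_{K,B_K}\le c_*<1$. If $M\le3$ the high part is empty.

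Applying Minkowski to the square function ($\sqrt{a^2+b^2}\le a+b$),
\[
 \mathcal W_{B_K}(f)\le \bigl(C_{K,B_K}M^3+c_*\Gamma\bigr)\norm f_\infty .
\]

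Now take the supremum over all admissible $f$ and $n\le N$, which gives $\Gamma\le C_{K,B_K}M^3+c_*\Gamma$. Since $\Gamma<\infty$, this rearranges to $\Gamma\le D_KM^3$ with $D_K=C_{K,B_K}/(1-c_*)$, and $D_K$ is independent of both $M$ and $N$.

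One point needs checking: Proposition \ref{prop:high-levels} must be applied to bivariate functions $Q_\omega$ living on $C_K^{n_x}\times C_K^{n_y}$ with $n_x,n_y\le n\le N$. When $\Gamma$ is invoked inside Lemma \ref{lem:row-column}, it is applied to slices $x\mapsto Q(x,y)$ on $C_K^{n_x}$ with $n_x\le N$. These slices still have interaction order at most $M$, so they are admissible in the supremum defining $\Gamma^{(K)}_{M,N}$. This consistency is what makes the cutoff argument legitimate.

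For \eqref{eq:weighted-final}, given any $f$ on $C_K^n$ of interaction order at most $M$, choose $N\ge n$. Then $\mathcal W_{B_K}(f)\le\Gamma^{(K)}_{M,N}(B_K)\norm f_\infty\le D_KM^3\norm f_\infty$, and the bound does not depend on $N$. Functions with nonzero coefficient at $\alpha=0$ are harmless, since $\mathcal W$ ignores $r=0$, and all the layer estimates above concern only $r\ge1$.

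The main obstacle is the bookkeeping in the high-level step: confirming that the constants from Lemma \ref{lem:capture} and Proposition \ref{prop:central} combine exactly into $c_{K,B_K}$, and that every use of $\Gamma$ stays within dimension $\le N$ and order $\le M$. With the earlier results taken as given, the rest is the algebraic absorption.
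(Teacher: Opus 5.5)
Your proposal is correct and follows the paper's proof. You split $\mathcal W_{B_K}(f)$ at level $3$, bound the low part by Corollary~\ref{cor:low} and the high part by Proposition~\ref{prop:high-levels} with $c_{K,B_K}\le c_*<1$ from Lemma~\ref{lem:numerical}, take the supremum, and absorb the $c_*\Gamma^{(K)}_{M,N}(B_K)$ term using its a priori finiteness to get the $N$-independent bound $C_{K,B_K}M^3/(1-c_*)$. Your explicit check that the slices used in Lemma~\ref{lem:row-column} stay within dimension $\le N$ and order $\le M$ is exactly what the paper's cutoff $n_x,n_y\le N$ is set up to ensure.
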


\begin{proof}
For $f$ entering the supremum in \eqref{eq:Gamma}, split the weighted square
function at level $3$.  Corollary \ref{cor:low}, Proposition
\ref{prop:high-levels}, and Lemma \ref{lem:numerical} give
\begin{align*}
 \mathcal W_{B_K}(f)
 &\le
 \left[
  \sum_{r=1}^{\min\{3,M\}}
  \frac{A_r(f)^2}{r^{2B_K}}
 \right]^{1/2}
 +
 \left[
  \sum_{r=4}^M
  \frac{A_r(f)^2}{r^{2B_K}}
 \right]^{1/2}\\
 &\le C_KM^3\norm f_\infty
 +c_*\Gamma^{(K)}_{M,N}(B_K)\norm f_\infty.
\end{align*}
Taking the supremum yields
\[
 \Gamma^{(K)}_{M,N}(B_K)
 \le C_KM^3+c_*\Gamma^{(K)}_{M,N}(B_K).
\]
Since $c_*<1$,
\[
 \Gamma^{(K)}_{M,N}(B_K)
 \le\frac{C_K}{1-c_*}M^3.
\]
The right-hand side is independent of $N$, so one may let $N\to\infty$.
\end{proof}

\section{Polynomial cyclic Bohnenblust--Hille bounds}

\begin{theorem}[Interaction-order inequality]
\label{thm:interaction}
For each fixed $K\ge2$, there is a constant $C_K<\infty$ such that
\begin{equation}
\label{eq:interaction-final}
 \BH^{\mathrm{int}}_{d,K}
 \le C_Kd^{4\gamma_K+5}
 \qquad(d\ge1).
\end{equation}
\end{theorem}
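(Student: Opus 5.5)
We derive the statement from Theorem \ref{thm:weighted} with $M=d$ and $B_K=4\gamma_K+2$. We then convert the weighted square function into the unweighted $\ell^{p_d}$ norm over all interaction orders $r\le d$.

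Let $f$ have Fourier support in $s(\alpha)\le d$. The constant term is handled trivially, since $\abs{\wh f(0)}\le\norm f_\infty$. For each $1\le r\le d$, \eqref{eq:weighted-final} gives
\[
 A_r(f)\le r^{B_K}D_Kd^3\norm f_\infty\le D_Kd^{B_K+3}\norm f_\infty .
\]
We need to pass from the exponent $p_r$ to $p_d$. Since $p_r\le p_d$ for $r\le d$, monotonicity of $\ell^p$ norms gives
\[
 \Bigl(\sum_{s(\alpha)=r}\abs{\wh f(\alpha)}^{p_d}\Bigr)^{1/p_d}\le A_r(f).
\]

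To sum over $r$, we apply the triangle inequality in $\ell^{p_d}$ across the $d+1$ layers. The cheapest route is Hölder:
\[
 \Bigl(\sum_{s(\alpha)\le d}\abs{\wh f(\alpha)}^{p_d}\Bigr)^{1/p_d}
 \le \Bigl(\sum_{r=0}^{d}\|\wh f|_{s=r}\|_{p_d}^{p_d}\Bigr)^{1/p_d}
 \le (d+1)^{1/p_d-1/2}\Bigl(\sum_{r}\|\wh f|_{s=r}\|_{p_d}^{2}\Bigr)^{1/2}.
\]
The prefactor satisfies $(d+1)^{1/p_d-1/2}=(d+1)^{1/(2d)}\le 2$.

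We then bound the square sum by inserting the weights:
\[
 \Bigl(\sum_r A_r^2\Bigr)^{1/2}\le d^{B_K}\Bigl(\sum_r A_r^2r^{-2B_K}\Bigr)^{1/2}\le D_Kd^{B_K+3}\norm f_\infty .
\]
Since $B_K+3=4\gamma_K+5$, this yields $\BH^{\mathrm{int}}_{d,K}\le C_Kd^{4\gamma_K+5}$ with $C_K=2D_K+1$.

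The only point needing care is that the weighted bound is stated for the supremum over $N$. This is legitimate because Theorem \ref{thm:weighted} holds in every dimension, so the resulting bound is dimension-free. There is no real obstacle: the exponent bookkeeping ($d^{3}$ from the low levels times $d^{B_K}$ from un-weighting) is the whole content, and no extra factor of $d$ arises because of the square-function aggregation.

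The total-degree version then follows immediately from \eqref{eq:degree-comparison}.
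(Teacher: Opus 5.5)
Your proposal is correct and is essentially the paper's proof. The ingredients match: monotonicity from $p_r\le p_d$, the bounded H\"older loss $d^{1/p_d-1/2}=d^{1/(2d)}$, and the un-weighting $r^{-B_K}\ge d^{-B_K}$ applied to \eqref{eq:weighted-final} with $M=d$, which gives $d^{B_K+3}=d^{4\gamma_K+5}$. The only cosmetic difference is that you fold the $r=0$ layer into the H\"older count, getting $(d+1)^{1/(2d)}$ and bounding $\abs{\wh f(0)}\le\norm f_\infty$ inside the square sum, whereas the paper applies H\"older over $1\le r\le d$ and treats $\wh f(0)$ separately.
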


\begin{proof}
Let $f$ have interaction order at most $d$.  For each $r\le d$, since
$p_r\le p_d$, monotonicity of finite $\ell_p$ norms gives
\[
 \left(
  \sum_{s(\alpha)=r}\abs{\wh f(\alpha)}^{p_d}
 \right)^{1/p_d}
 \le A_r(f).
\]
Put $b_r=A_r(f)/r^{B_K}$.  Then Theorem \ref{thm:weighted}, with $M=d$,
gives
\begin{align*}
 \left(
  \sum_{\substack{\alpha\\1\le s(\alpha)\le d}}
  \abs{\wh f(\alpha)}^{p_d}
 \right)^{1/p_d}
 &\le\left(\sum_{r=1}^d A_r(f)^{p_d}\right)^{1/p_d}\\
 &\le d^{B_K}
 \left(\sum_{r=1}^d b_r^{p_d}\right)^{1/p_d}\\
 &\le d^{B_K}d^{1/p_d-1/2}
 \left(\sum_{r=1}^d b_r^2\right)^{1/2}\\
 &\le D_Kd^{B_K+3+1/(2d)}\norm f_\infty.
\end{align*}
The factor $d^{1/(2d)}$ is uniformly bounded.  Also
$\abs{\wh f(0)}\le\norm f_\infty$.  Since $B_K+3=4\gamma_K+5$, the result
follows after enlarging the constant.
\end{proof}

\begin{corollary}[Exact interaction layer]
\label{cor:exact}
If $f$ is supported on $s(\alpha)=d$, then
\begin{equation}
\label{eq:exact-final}
 A_d(f)\le C_Kd^{4\gamma_K+5}\norm f_\infty.
\end{equation}
\end{corollary}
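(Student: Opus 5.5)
This is a direct specialization of Theorem \ref{thm:interaction}, so the plan is to read it off rather than prove anything new. Let $f$ be supported on the exact layer $s(\alpha)=d$. Then $f$ has interaction order at most $d$, so it is admissible in the definition \eqref{eq:BH-int-def} of $\BH^{\mathrm{int}}_{d,K}$. Since every nonzero coefficient of $f$ has $s(\alpha)=d$, the left-hand side of \eqref{eq:BH-int-def} is exactly
\[
 \left(\sum_{s(\alpha)=d}\abs{\wh f(\alpha)}^{p_d}\right)^{1/p_d}=A_d(f).
\]
Hence $A_d(f)\le\BH^{\mathrm{int}}_{d,K}\norm f_\infty\le C_Kd^{4\gamma_K+5}\norm f_\infty$ by \eqref{eq:interaction-final}.

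Alternatively, one can avoid the monotonicity and $\ell^{p_d}$-summation steps of Theorem \ref{thm:interaction} and go through Theorem \ref{thm:weighted} with $M=d$. In \eqref{eq:weighted-final} only the $r=d$ term survives, since $f_r=0$ for $r\ne d$. This gives
\[
 \frac{A_d(f)}{d^{B_K}}\le D_Kd^3\norm f_\infty,
 \qquad\text{that is,}\qquad
 A_d(f)\le D_Kd^{B_K+3}\norm f_\infty .
\]
Since $B_K+3=4\gamma_K+5$, this is the same bound.

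There is no real obstacle here. The only care needed is bookkeeping: the constant $C_K$ should be taken as in Theorem \ref{thm:interaction}, or as $D_K$, enlarged if necessary. For $d\le3$ one may also note that Lemma \ref{lem:base-levels} already gives the estimate directly.
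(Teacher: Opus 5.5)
Your proposal is correct, and your second route is exactly the paper's proof: read off the single $r=d$ term of \eqref{eq:weighted-final} and use $B_K+3=4\gamma_K+5$. Your primary route, specializing Theorem~\ref{thm:interaction} after noting that the left side of \eqref{eq:BH-int-def} equals $A_d(f)$ for an exact layer, is equally valid; it just passes through the extra monotonicity/H\"older step and the bounded factor $d^{1/(2d)}$, which the direct route avoids.
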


\begin{proof}
The single $r=d$ term in \eqref{eq:weighted-final} gives
$A_d(f)/d^{B_K}\le D_Kd^3\norm f_\infty$.
\end{proof}

\begin{corollary}[Canonical total degree]
\label{cor:degree}
For every fixed $K\ge2$,
\begin{equation}
\label{eq:degree-final}
 \BH^{\deg}_{d,K}
 \le C_Kd^{4\gamma_K+5}.
\end{equation}
In particular, for $K\ge3$,
\begin{equation}
\label{eq:degree-power}
 \BH^{\deg}_{d,K}
 \le C_Kd^{5+K\log(K-1)/(K-2)}.
\end{equation}
\end{corollary}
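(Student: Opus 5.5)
The plan is to reduce Corollary \ref{cor:degree} directly to Theorem \ref{thm:interaction} through the comparison \eqref{eq:degree-comparison}. I will not run any new analytic argument.

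Let $f:C_K^N\to\C$ have Fourier support in $\{\abs\alpha\le d\}$, computed with the canonical representatives $\alpha_j\in\{0,\ldots,K-1\}$. Each active coordinate satisfies $\alpha_j\ge1$, so
\[
 s(\alpha)=\#\{j:\alpha_j\ne0\}\le\sum_j\alpha_j=\abs\alpha\le d .
\]
Hence $f$ is supported on interaction orders at most $d$. It is therefore admissible in the definition \eqref{eq:BH-int-def} of $\BH^{\mathrm{int}}_{d,K}$, and the index set $\{\abs\alpha\le d\}$ is contained in $\{s(\alpha)\le d\}$. The left-hand side of the degree inequality, an $\ell^{p_d}$ sum over $\{\abs\alpha\le d\}$, is thus at most the corresponding sum over $\{s(\alpha)\le d\}$. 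All the extra coefficients vanish, so the two sums are in fact equal. This gives
\[
 \Bigl(\sum_{\abs\alpha\le d}\abs{\wh f(\alpha)}^{p_d}\Bigr)^{1/p_d}
 \le\BH^{\mathrm{int}}_{d,K}\norm f_\infty ,
\]
which is exactly \eqref{eq:degree-comparison}. Combining it with \eqref{eq:interaction-final} yields \eqref{eq:degree-final} with the same constant $C_K$.

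For \eqref{eq:degree-power}, I substitute $\gamma_K=K\log(K-1)/(4(K-2))$ for $K\ge3$ into the exponent, which gives $4\gamma_K+5=5+K\log(K-1)/(K-2)$, as already recorded in \eqref{eq:power-explicit}.

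The only point that needs care, and the only possible obstacle, is the use of canonical representatives. Degree is not well defined modulo $K$ otherwise, and $x_j^{-1}=x_j^{K-1}$ shows that a non-canonical count could undercount. With $\alpha_j\in\{0,\ldots,K-1\}$ fixed, every nonzero entry contributes at least $1$ to $\abs\alpha$, so the inequality $s(\alpha)\le\abs\alpha$ is immediate and the argument closes.
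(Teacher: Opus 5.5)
Your proposal is correct and is the same argument the paper uses. The paper's proof likewise just combines the comparison \eqref{eq:degree-comparison}, which rests on $s(\alpha)\le\abs\alpha$ for canonical representatives, with Theorem \ref{thm:interaction}; you have only written out that comparison and the exponent identity \eqref{eq:power-explicit} more explicitly than the paper does.
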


\begin{proof}
Use \eqref{eq:degree-comparison} and Theorem \ref{thm:interaction}.
\end{proof}

\section{What is the difference with \cite{PellegrinoRaposo}}

It is worth emphasizing the distinction between this argument and the
subexponential recurrence of \cite{PellegrinoRaposo}.  The latter proceeds by
homogenizing a Fourier character into $d$ positions, passing to a symmetric
$d$-linear form, splitting the positions into blocks, 
and polarizing a mixed evaluation back to the diagonal.  Its
recurrence contains both a Potts loss and a binomial orbit--polarization loss.

None of those steps appears here.  The weighted method works directly with
the canonical Fourier coefficients and all interaction layers at once.  A
coordinate is colored as a whole, so the local degree of each variable is transported
without alteration.  The entropy gain from balanced support splits is paid
once inside a square function and then absorbed.  In that precise sense, the
only part of the cyclic recurrence needed at high degree is its Potts
hypercontractive estimate.

\begin{remark}[Why modular arithmetic is harmless here]
The obstruction in cyclic polynomial extensions is often that products of
local frequencies add modulo $K$.  The present splitting never multiplies or
combines two frequencies on the same coordinate.  It only records whether a
coordinate-frequency pair belongs to the $x$ block or the $y$ block.  Exact
ancestry therefore survives unchanged.
\end{remark}

\begin{remark}[Why support order is the right parameter]
At the level of total algebraic degree, a character may contain a large local
power $x_j^{K-1}$.  At the level of interaction order, that coordinate still
contributes exactly one unit.  Thus every interaction-order character is
square-free in the geometry relevant to random coloring.  This is the cyclic
analogue of the absence of dominant powers on the Boolean cube.
\end{remark}

\begin{remark}[The Boolean endpoint]
For $K=2$, $\gamma_2=1/2$ and the argument gives the power $d^7$.  This is the
same weighted Boolean mechanism, with the central bootstrap started at level
$4$ rather than level $6$ as it has been done in \cite{SloteVolberg}.
\end{remark}

\begin{remark}[Scope]
The estimate is polynomial in $d$ for each fixed $K$.  It is not uniform in
$K$: the exponent behaves as $5+\log K+o(1)$, and the implicit constant also
depends on $K$ through the three low-level estimates and the interior
projection constants.
\end{remark}

\appendix

\section{Numerics}

The worst contraction occurs at the smallest allowed value
$\gamma_K=1/2$.  Its exact logarithm is
\[
 \log c_*
 =5\log3+\frac18\log5-\frac{17}{2}\log2.
\]
Direct interval evaluation gives
\[
 -0.197511<\log c_*<-0.197509,
 \qquad
 0.820771<c_*<0.820773.
\]
Thus the absorption margin is larger than $0.179$.

For reference, the powers furnished by the theorem begin as follows:
\[
\begin{array}{c|c|c}
K&\gamma_K&4\gamma_K+5\\
\hline
2&0.500000&7.000000\\
3&0.519860&7.079442\\
4&0.549306&7.197225\\
5&0.577623&7.310491\\
6&0.603539&7.414157
\end{array}
\]

\section{
Declaration of generative AI and AI-assisted technologies in the manuscript
preparation process}
During the preparation of this work, the authors used ChatGPT Pro (OpenAI) to assist with exploratory
proof development, especially in analyzing Section 5 of \cite{PellegrinoTeixeira}. The authors reviewed and verified the mathematical content, edited the
manuscript, and take full responsibility for its content.

\end{document}